\theoremstyle{plain}
\newtheorem{thm}{Theorem}
\newtheorem{cor}{Corollary}
\newtheorem{lem}[cor]{Lemma}
\newtheorem{prop}[cor]{Proposition}
\newtheorem{conj}[cor]{Conjecture}
\theoremstyle{definition}
\newtheorem{remark}[cor]{Remark}
\numberwithin{cor}{section}
\numberwithin{equation}{section}
\newcommand{\R}{\mathbb R}
\renewcommand{\d}{n}
\newcommand{\Rd}{\mathbb R^\d}
\newcommand{\ep}{\varepsilon}
\newcommand{\dist} {\mathrm{dist}}
\newcommand{\Sy}{{\mathcal S_\d}}
\newcommand{\Mat}{{\mathcal M_\d}}
\newcommand{\iden}{I_n}
\newcommand{\pucci}{{\mathcal{P}_{\lambda,\Lambda}^-}}
\newcommand{\Pucci}{{\mathcal{P}_{\lambda,\Lambda}^+}}
\DeclareMathOperator{\tr}{tr}
\begin{document}

\title[Partial regularity for fully nonlinear elliptic equations]{Partial regularity of solutions of fully nonlinear uniformly elliptic equations}
\author[S. N. Armstrong]{Scott N. Armstrong}
\address{Department of Mathematics\\ The University of Chicago\\ 5734 S. University Avenue
Chicago, Illinois 60637.}
\email{armstrong@math.uchicago.edu}
\author[L. Silvestre]{Luis Silvestre}
\address{Department of Mathematics\\ The University of Chicago\\ 5734 S. University Avenue
Chicago, Illinois 60637.}
\email{luis@math.uchicago.edu}
\author[C. K. Smart]{Charles K. Smart}
\address{Courant Institute of Mathematical Sciences \\ 251 Mercer Street \\New York, NY 10012.}
\email{csmart@cims.nyu.edu}
\date{\today}
\keywords{partial regularity, fully nonlinear elliptic equation}
\subjclass[2010]{35B65}

\begin{abstract}
We prove that a viscosity solution of a uniformly elliptic, fully nonlinear equation is $C^{2,\alpha}$ on the compliment of a closed set of Hausdorff dimension at most $\ep$ less than the dimension. The equation is assumed to be $C^1$, and the constant $\ep > 0$ depends only on the dimension and the ellipticity constants. The argument combines the $W^{2,\ep}$ estimates of Lin with a result of Savin on the $C^{2,\alpha}$ regularity of viscosity solutions which are close to quadratic polynomials. 
\end{abstract}

\maketitle

\section{Introduction} \label{I}

In this paper, we prove a partial regularity result for viscosity solutions of the uniformly elliptic equation
\begin{equation} \label{ueeq}
F(D^2u) = 0.
\end{equation}
The operator $F$ is assumed to be uniformly elliptic and to have uniformly continuous first derivatives (these hypotheses are precisely stated in the next section). 

If $F$ is concave or convex, then solutions of \eqref{ueeq} in a domain $\Omega \subseteq\Rd$ are known to belong to $C^{2,\alpha}(\Omega)$ for some small $\alpha > 0$, according to the famous theorem of Evans~\cite{E} and Krylov~\cite{K} (see also \cite{CS} for a simple proof). Viscosity solutions of~\eqref{ueeq} have also been shown to be classical solutions for certain classes of nonconvex operators by Yuan~\cite{Y} as well as Cabr{\'e} and Caffarelli~\cite{CC1}. The latter result applies, for example, to an $F$ which is the minimum of a convex and a concave operator. However, $C^2$ estimates for solutions of \eqref{ueeq} are unavailable for general $F$, as attested by the recent counterexamples of Nadirashvili and Vl{\u{a}}du{\c{t}} \cite{NV1,NV2}. In fact, a counterexample to $C^{1,1}$ regularity was presented in \cite{NV2}, and therefore the best available regularity for solutions of \eqref{ueeq} is $C^{1,\alpha}$.

\medskip

In this paper, we study the singular set of a solution $u$ of \eqref{ueeq}, consisting of those points $x$ for which $u$ of \eqref{ueeq} fails to be $C^{2,\alpha}$ in any neighborhood of $x$. Our result asserts that the singular set has Hausdorff dimension at most $\d - \ep$, where the constant $\ep > 0$ depends only on the ellipticity of $F$ and  the dimension of the ambient space. The hypotheses (F1) and (F2) are stated in Section~\ref{P}.

\begin{thm} \label{main}
Assume that $F$ satisfies (F1) and (F2). Let $u \in C(\Omega)$ be a viscosity solution of \eqref{ueeq} in a domain $\Omega \subseteq \Rd$. Then there is a constant $\ep > 0$, depending only on $\d$, $\lambda$, and $\Lambda$, and a closed subset $\Sigma \subseteq \overline \Omega$ of Hausdorff dimension at most $\d - \ep$, such that $u \in C^{2,\alpha} (\Omega\setminus  \Sigma)$ for every $0 < \alpha < 1$.
\end{thm}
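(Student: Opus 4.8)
The plan is to use Savin's regularity theorem to describe the singular set, to bound its size with a scale-invariant quantitative version of Lin's $W^{2,\ep}$ estimate, and to convert that measure bound into a statement about Hausdorff dimension by a covering argument.

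First I would localize the singular set. Let $\delta_0 = \delta_0(\d,\lambda,\Lambda)>0$ be the smallness threshold in Savin's theorem, and say that $u$ is \emph{good at $(x_0,r)$}, with $\overline{B_r(x_0)}\subset\Omega$, if there is a quadratic polynomial $P$ with $F(D^2P)=0$ and $\norm{u-P}_{L^\infty(B_r(x_0))}\le\delta_0 r^2$. If $u$ is good at $(x_0,r)$, then $v(y):=r^{-2}u(x_0+ry)$ solves $F(D^2v)=0$ in $B_1$ with $\norm{v-\tilde P}_{L^\infty(B_1)}\le\delta_0$ for the quadratic $\tilde P(y):=r^{-2}P(x_0+ry)$, which still satisfies $F(D^2\tilde P)=0$; Savin's theorem gives $v\in C^{2,\alpha_0}(B_{1/2})$, hence $u\in C^{2,\alpha_0}(B_{r/2}(x_0))$. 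Since $D^2u\in C^{\alpha_0}$ near $x_0$ and $F\in C^1$, the coefficients $F_{ij}(D^2u)$ in the linear equation solved by each $\partial_e u$ are continuous, so the $W^{2,p}$ theory for continuous coefficients upgrades this to $u\in C^{2,\alpha}(B_{r/4}(x_0))$ for every $\alpha<1$. Consequently, setting
\[
\Sigma:=\set{x_0\in\Omega : u \text{ is good at } (x_0,r) \text{ for no admissible } r>0},
\]
$\Sigma$ is relatively closed in $\Omega$ and $u\in C^{2,\alpha}(\Omega\setminus\Sigma)$ for every $\alpha<1$; it remains to bound $\dim_{\mathcal H}\Sigma$.

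The core of the argument is a quantitative, scale-invariant form of Lin's estimate: there should exist $\mu\in(0,1)$ and $C\ge1$, depending only on $\d,\lambda,\Lambda$, so that for every solution $v$ of $F(D^2v)=0$ in $B_1$ normalized by $\inf_{\ell\text{ affine}}\norm{v-\ell}_{L^\infty(B_1)}\le1$, and every $0<r\le\tfrac12$,
\[
\abs{\set{x\in B_{1/2} : v \text{ is good at } (x,\rho) \text{ for no } \rho\in[r,\tfrac12]}}\le C\,r^{\ep},\qquad \ep:=\log_{1/2}(1-\mu)>0.
\]
I would prove this by iterating a one-scale estimate over dyadic scales. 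The one-scale statement — that, for a universal $\mu$, every normalized solution $v$ is good at $(x,\rho)$ for at least a fraction $\mu$ (in measure) of $x\in B_{1/2}$ at some single scale $\rho=\rho_v$ — I would get by compactness: were it false, a sequence $v_j$ would converge locally uniformly to a solution $v$ whose good set is empty at every scale, yet Lin's $W^{2,\ep}$ estimate forces $v$ to be twice differentiable almost everywhere, and at such points the equation holds classically, so the second-order Taylor polynomial is an admissible $P$ and $v$ is good there at all small scales; Egorov's theorem then yields a uniform good scale on a set of nearly full measure, which by uniform convergence is good for $v_j$ at large $j$, a contradiction. One then renormalizes $v$ on dyadic subcubes using the interior $C^{1,\alpha}$ estimate, applies the one-scale statement inside each subcube still in the bad set, and organizes the construction as a stopping-time decomposition; geometric decay of the bad-set measure across scales produces the power $\ep$.

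Finally, to obtain the dimension bound, cover $\Omega$ by balls $B$ with $2B\subset\Omega$; on each, rescale and renormalize $u$ (and, correspondingly, the operator, whose ellipticity constants are unchanged) to reduce to the normalized setting, so that by the description of $\Sigma$ the set $\Sigma\cap\tfrac12 B$ lies, for every $r$, in the bad set of the displayed inequality. That inequality bounds the number of dyadic cubes of side $2^{-k}$ meeting $\Sigma\cap\tfrac12 B$ by $C\,2^{(\d-\ep)k}$, so $\sum(\diam)^{\d-\ep+\eta}\to0$ as $k\to\infty$ for every $\eta>0$; hence $\mathcal H^{\d-\ep+\eta}(\Sigma)=0$ for all $\eta>0$, i.e. $\dim_{\mathcal H}\Sigma\le\d-\ep$. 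The main obstacle is the middle step: converting Lin's $W^{2,\ep}$ estimate into a genuinely scale-invariant decay while (i) arranging that the approximating polynomials can be taken to solve $F(D^2P)=0$, which is exactly what lets Savin's theorem be applied, and (ii) keeping the rescaled solutions in a fixed normalized class, with the second-order oscillation under control, as the scale shrinks. Managing this interplay between the measure-theoretic $W^{2,\ep}$ bound, the Savin-based compactness, and the dyadic iteration is where essentially all of the difficulty lies.
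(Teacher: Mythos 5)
Your outline has the right ingredients---Savin's flatness theorem defines the regular set, Lin's $W^{2,\ep}$ estimate is what controls its complement, and a Vitali-type covering converts a measure bound at scale $r$ into a Hausdorff-dimension bound---but the middle step, the scale-invariant bound $\abs{E_r\cap B_{1/2}}\le Cr^\ep$ on the bad set, is exactly where your argument is not carried through, and the route you sketch for it does not close.

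Two concrete problems. First, your one-scale density estimate, obtained by compactness, produces a scale $\rho_v$ that depends on the particular solution $v$. When you pass to a dyadic subcube and renormalize, the new solution has its own $\rho$, and these scales are incommensurable; there is no clean way to organize the resulting ``stopping-time decomposition'' into a geometric decay of the form $(1-\mu)^k$ over dyadic generations. If instead you try to fix the scale $\rho$ in advance and prove the density estimate at that fixed $\rho$, the compactness argument breaks: twice differentiability of the limit $v$ (from Lin's estimate plus Egorov) only gives goodness at scales $\rho<\rho_0(x)$, with $\rho_0(x)$ depending measurably on $x$ and not uniformly bounded below, so it does not contradict failure of goodness at the fixed scale $\rho$. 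Second, and more fundamentally, you are trying to derive the partial regularity from the qualitative statement ``$u$ is twice differentiable a.e.\ with $D^2u\in L^\ep$.'' The paper explicitly points out (in the discussion after its $W^{2,\ep}$ proposition) that this weaker statement is \emph{not sufficient}, precisely because it carries no uniform control over scales. What the paper proves instead is a $W^{3,\ep}$ estimate for the quantity
\[
\Psi(u,B_1)(x)=\inf\set{A\ge 0 : \exists\, p, M \text{ s.t. } \abs{u(y)-u(x)+p\cdot(x-y)+(x-y)\cdot M(x-y)}\le\tfrac16A\abs{x-y}^3 \ \forall y\in B_1},
\]
which encodes a quadratic expansion valid on the \emph{entire} ball $B_1$ with a cubic error. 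This global (rather than infinitesimal) formulation is the whole point: smallness of $\Psi(x)$ at a single point immediately gives closeness to a quadratic on every ball $B(x,r)$, which is exactly the input Savin's theorem needs, with the scale $r$ free. The $W^{3,\ep}$ bound for $\Psi$ is obtained in a completely soft way, with no iteration at all: differentiate the equation, observe that each directional derivative $u_e$ solves $\pucci(D^2u_e)\le 0\le\Pucci(D^2u_e)$, apply Lin's estimate to $u_e$ in the form stated for the globally-defined quantity $\underline\Theta(u_e,B_1)$, and then integrate along the segment from $x$ to $y$ to pass from the $\Theta$'s of the derivatives to $\Psi$ of $u$ (the paper's Lemma~\ref{theta-delta}). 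With that in hand, the covering argument is a single application of Vitali: if $x_i\in\Sigma$ then $\Psi>r^{-1}\delta$ on all of $B(x_i,r)$, so $m$ disjoint such balls force $mr^\d\le C(r^{-1}\delta)^{-\ep}$, hence $m\le Cr^{\ep-\d}$, giving $\mathcal H^{\d-\ep}(\Sigma)<\infty$.

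In short, you have correctly identified the two outer layers of the argument and the fact that the measure bound is the crux, but you are missing the key technical device: differentiating the equation and formulating the $W^{2,\ep}$/$W^{3,\ep}$ estimates in terms of globally touching paraboloids, which replaces the entire compactness-and-iteration scheme (with its unresolved uniformity issues) by one elementary calculation and one covering.
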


As far as we know, Theorem~\ref{main} is the first result which provides an estimate on the smallness of the singular set of a solution of a general uniformly elliptic, fully nonlinear equation. The constant $\ep > 0$ which appears in the statement of the theorem is the same $\ep$ as in the $W^{2,\ep}$ estimate of Lin~\cite{L}; see Remark~\ref{whatisep}.

\medskip

Let us describe the idea of the proof of Theorem \ref{main}. By differentiating \eqref{ueeq} and applying $W^{2,\ep}$ estimates to $Du$, we effectively obtain a $W^{3,\ep}$ estimate for the solution $u$. Precisely formulated, this implies that $u$ has a global second-order Taylor expansion almost everywhere and that the constant in front of the cubic error term lies in $L^\ep$. Near points possessing quadratic expansions, we apply a generalization of a result of Savin~\cite{S}, which asserts that any viscosity solution of \eqref{ueeq} that is sufficiently close to a quadratic polynomial must be $C^{2,\alpha}$. The $L^\ep$ integrability of the modulus of the quadratic expansion then restricts the Hausdorff dimension of the singular set.

\medskip

While $\ep$ does not depend on the modulus $\omega$ in (F2), the assumption that $F$ is $C^1$ with a uniformly continuous first derivative is crucial to our method of proof. In particular, Theorem~\ref{main} does not apply to Bellman-Isaacs equations, which have the form
\begin{equation} \label{BI}
F(D^2u) : = \inf_{\alpha\in \mathcal{I}} \sup_{\beta\in\mathcal{J}} \left( -\tr(A_{\alpha\beta} D^2u) \right) = 0.
\end{equation}
Such operators $F$ are positively homogeneous, and obviously any function which is positively homogeneous and differentiable at the origin is linear. Therefore, the assumption (F2) is incompatible with nonlinearity if $F$ is positively homogeneous. We do not know whether such a partial regularity result is true for equations of the form \eqref{BI}.

\medskip

In the next section, we state our notation and some preliminary results needed in the proof of Theorem~\ref{main}. In Sections~\ref{W2} and~\ref{FL}, we give complete arguments for the $W^{2,\ep}$ estimate and the $C^{2,\alpha}$ regularity for flat solutions of \eqref{ueeq}. The proof of Theorem~\ref{main} is presented in Section~\ref{PR}.

\section{Preliminaries} \label{P}

In this section we state our hypotheses and collect some standard ingredients needed in the proof of Theorem~\ref{main}.

\subsection*{Notation and hypotheses}
Let $\Mat$ denote the set of real $\d$-by-$\d$ matrices, and $\Sy\subseteq \Mat$ the set of symmetric matrices. Recall that the Pucci extremal operators are defined for constants $0 < \lambda \leq \Lambda$ and $M\in \Sy$ by
\begin{equation*}
\Pucci(M) : =  \sup_{\lambda \iden \leq A \leq \Lambda\iden} -\tr (AM) \quad \mbox{and} \quad \pucci(M) : =  \inf_{\lambda \iden \leq A \leq \Lambda\iden} -\tr (AM).
\end{equation*}
Throughout this paper, we assume the nonlinear operator $F:\Sy \to \R$ satisfies the following:
\begin{enumerate}
\item[(F1)] $F$ is uniformly elliptic and Lipschitz; precisely, we assume that there exist constants $0 < \lambda \leq \Lambda$ such that, for every $M,N\in \Sy$,
\begin{equation*}
\pucci(M-N) \leq F(M) - F(N) \leq \Pucci(M-N).
\end{equation*}
\item[(F2)] $F$ is $C^1$ and its derivative $DF$ is uniformly continuous, that is, there exists an increasing continuous function $\omega:[0,\infty) \to [0,\infty)$ such that $\omega(0) = 0$, and for every $M,N\in \Sy$,
\begin{equation*}
\left| DF(M) - DF(N) \right| \leq \omega\!\left( |M-N| \right).
\end{equation*}
\end{enumerate}
We call a constant \emph{universal} if it depends only on the dimension $\d$, the ellipticity constants $\lambda$ and $\Lambda$, and the modulus $\omega$. Throughout, $c$ and $C$ denote positive universal constants which may vary from line to line. We denote by $Q_{x,r}$ the cube centered at $x$ and of side length $2r$. That is, we define $Q_{x,r}:= \{ y \in \Rd: |y_i - x_i| \leq r \}$ and $Q_{r} := Q_{0,r}$. Balls are written $B(x,r): = \{ y\in \Rd: |y-x| < r\}$ and $B_r: = B(0,r)$.

\medskip

Recall that the \emph{Hausdorff dimension} of a set $E \subseteq \Rd$ is defined by
\begin{multline*}
\mathcal{H}_{\mathrm{dim}}(E) : = \inf \bigg\{ 0 \leq s < \infty : \mbox{for all} \ \delta > 0, \ \mbox{there exists a collection} \  \{ B(x_j,r_j)\}  \ \mbox{of balls}\\
\mbox{such that} \ \ E \subseteq \bigcup_{j=1}^\infty B(x_j,r_j) \quad \mbox{and} \quad \sum_{j=1}^\infty r_j^s < \delta \bigg\}.
\end{multline*}

\subsection*{Standard results}
In this subsection, we state three results needed below. Proofs of all three of these results can be found in \cite{CC}.  We first recall the statement of the Alexandroff-Bakelman-Pucci (ABP) inequality. We use the notation $u^+ = \max\{ 0 , u\}$ and $u^- : = - \min\{ 0 , u \}$.

\begin{prop}[ABP inequality] \label{ABP}
Assume that $B_R \subseteq \Rd$ and $f \in C(B_R) \cap L^\infty(B_R)$. Suppose that $u\in C(B_R)$ satisfies inequality
\begin{equation*}
\left\{ \begin{aligned}
& \Pucci(D^2u) \geq f & \mbox{in} & \ B_R, \\
& u \geq 0 & \mbox{on} & \ \partial B_R.
\end{aligned} \right.
\end{equation*}
Then 
\begin{equation*}
\sup_{B_R} u^- \leq C R \| f^+ \|_{L^\d(\{ \Gamma_u = u \})},
\end{equation*}
where $C$ is a universal constant and $\Gamma_u$ is the convex envelope in $B_{2R}$ of $-u^-$, where we have extended $u \equiv 0$ outside $B_R$. 
\end{prop}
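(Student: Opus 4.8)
The plan is to carry out the classical convex--envelope proof of the ABP inequality, following \cite{CC}. Put $m := \sup_{B_R} u^-$; we may assume $m>0$, so that, since $u\ge 0$ on $\partial B_R$, the infimum of $u$ over $\overline{B_R}$ is attained at an interior point $x_0$ with $u(x_0)=-m$. Let $w := -u^-$, extended by $0$ outside $B_R$: then $w\le 0$, $w$ vanishes off $B_R$, and $w(x_0)=-m$, while its convex envelope $\Gamma_u$ in $B_{2R}$ satisfies $-m\le\Gamma_u\le 0$, coincides with $w$ (hence with $u$) on the contact set $K:=\{\Gamma_u=u\}$, and is Lipschitz on $\overline{B_R}$ with $\abs{\nabla\Gamma_u}\le Cm/R$ by convexity. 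The estimate reduces to two statements: (i) the image of $K\cap B_R$ under the subdifferential of $\Gamma_u$ has measure at least $c(m/R)^\d$; and (ii) at a.e.\ point of $K$, the Alexandrov Hessian of $\Gamma_u$ satisfies $\det D^2\Gamma_u\le C(f^+)^\d$. Given these, the change-of-variables inequality yields $c(m/R)^\d\le\int_K\det D^2\Gamma_u\dx\le C\norm{f^+}_{L^\d(K)}^\d$, which is the assertion.

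For (i)---the ``sliding plane'' lemma---fix $p\in\Rd$ with $\abs{p}<m/(4R)$ and let $x_1\in\overline{B_{2R}}$ be a minimizer of $x\mapsto w(x)-p\cdot x$. Then the affine function $\ell(x):=w(x_1)+p\cdot(x-x_1)$ lies below $w$ on $\overline{B_{2R}}$, hence below $\Gamma_u$, and touches $\Gamma_u$ from below at $x_1$, so $p\in\partial\Gamma_u(x_1)$. The value of $w-p\cdot x$ at $x_0$ is at most $-m+\abs{p}\abs{x_0}<-\tfrac34 m$, whereas at any point outside $B_R$ the value is $-p\cdot x\ge-2R\abs{p}>-\tfrac12 m$; since $x_1$ is a minimizer, this forces $x_1\in B_R$, and a similar comparison (now using that $w(x_1)=0$ if $u(x_1)>0$) excludes $u(x_1)>0$, so $\Gamma_u(x_1)=w(x_1)=u(x_1)$ and $x_1\in K\cap B_R$. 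Letting $p$ range over $B(0,m/(4R))$ gives $B(0,m/(4R))\subseteq\partial\Gamma_u(K\cap B_R)$, i.e.\ $\abs{\partial\Gamma_u(K\cap B_R)}\ge c(m/R)^\d$.

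For (ii), I would invoke Alexandrov's theorem---the convex function $\Gamma_u$ is twice differentiable a.e.---together with the standard change-of-variables bound $\abs{\partial\Gamma_u(E)}\le\int_E\det D^2\Gamma_u\dx$ for measurable $E$, proved in \cite{CC}. At a.e.\ $x\in K$ the graph of $u$ is touched from below by the second-order Taylor polynomial of $\Gamma_u$ at $x$; although $\Gamma_u$ is not $C^2$, the usual perturbation device produces a genuine $C^2$ function touching $u$ from below at $x$ with Hessian as close as one likes to $D^2\Gamma_u(x)$, so the viscosity inequality $\Pucci(D^2u)\ge f$ passes to the pointwise inequality $\Pucci(D^2\Gamma_u(x))\ge f(x)$. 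Because $D^2\Gamma_u(x)\ge 0$ on $K$, this bounds $\tr(D^2\Gamma_u(x))$ by a universal multiple of $f^+(x)$, and the arithmetic--geometric mean inequality $\det D^2\Gamma_u(x)\le\bigl(\tr(D^2\Gamma_u(x))/\d\bigr)^\d$ then gives the pointwise estimate in (ii).

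I expect the main obstacle to be precisely this transfer step: since $\Gamma_u$ is twice differentiable only in the Alexandrov (a.e.) sense and not of class $C^2$, it is not admissible as a viscosity test function, and one must argue carefully---exploiting the structure of the contact set and a density argument---that the equation nonetheless holds with the Alexandrov Hessian at almost every point of $K$. By contrast, the sliding-plane lemma and the arithmetic--geometric mean step are routine, and the only remaining care is the measure-theoretic bookkeeping attached to the area formula for the (in general multivalued) subdifferential map.
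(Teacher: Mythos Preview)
The paper does not prove this proposition at all; it is one of three standard results that the authors simply cite from \cite{CC}. Your proposal is precisely the classical convex-envelope argument given there, and the outline is correct: the sliding-plane lemma (your step~(i)), Alexandrov's theorem together with the area inequality for the subdifferential, and the viscosity inequality on the contact set combine exactly as you describe. You have also correctly identified the one genuinely delicate point, namely upgrading the a.e.\ Alexandrov Hessian of $\Gamma_u$ to a legitimate $C^2$ test function; in \cite{CC} this is handled by first showing that $\Gamma_u$ is $C^{1,1}$ on the contact set, after which the second-order Taylor paraboloid (slightly opened) is admissible.

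One small sign slip to watch: with the paper's convention $\Pucci(M)=\sup_{\lambda I\le A\le\Lambda I}(-\tr(AM))$, one has $\Pucci(M)=-\lambda\,\tr M$ whenever $M\ge 0$, so the inequality $\Pucci(D^2\Gamma_u(x))\ge f(x)$ on the contact set yields $\tr D^2\Gamma_u(x)\le f^-(x)/\lambda$, i.e.\ a bound by $f^-$, not $f^+$. This is consistent with how the proposition is actually applied in Lemma~\ref{w2heart}, where the right-hand side is $-CA\xi\le 0$ and it is $\|f^-\|_{L^\d}$ that controls the measure of the contact set; the appearance of $f^+$ in the stated proposition looks like a typo that your sketch has inherited.
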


We next recall an interior $C^{1,\alpha}$ regularity result for solutions of \eqref{ueeq}.

\begin{prop} \label{c1alph}
If $u$ is a viscosity solution of \eqref{ueeq} in $B_1$, then $u \in C^{1,\alpha}(\overline B_{1/2})$ for some universal $0 < \alpha <1$.
\end{prop}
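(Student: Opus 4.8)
The plan is to deduce Proposition~\ref{c1alph} from two standard facts about uniformly elliptic operators, both found in \cite{CC}: first, that differences of solutions of \eqref{ueeq} satisfy the Pucci inequalities; and second, the Krylov--Safonov interior Hölder estimate for functions in the Pucci class. Throughout, universal constants are also allowed to depend on $\|u\|_{L^\infty(B_1)}$ and $|F(0)|$, which is harmless since the Proposition asserts only that the exponent $\alpha$ is universal.

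First I would observe that, applying (F1) with $N = 0$, the solution $u$ itself satisfies $\pucci(D^2 u) \le -F(0) \le \Pucci(D^2 u)$ in the viscosity sense, so $u$ lies in the Pucci class with a bounded (constant) right-hand side; the Krylov--Safonov estimate then gives $u \in C^{\alpha_0}$ on every ball compactly contained in $B_1$, for some universal $\alpha_0 \in (0,1)$. More to the point, for $h \in \Rd$ with $|h|$ small the translate $u(\cdot + h)$ is also a viscosity solution of \eqref{ueeq} on $B_{1-|h|}$, and since $F(D^2 u(\cdot+h)) - F(D^2 u) = 0$, the standard argument via the theorem on sums (see \cite{CC}) shows that the difference $v_h := u(\cdot + h) - u$ satisfies $\pucci(D^2 v_h) \le 0 \le \Pucci(D^2 v_h)$ on $B_{1-|h|}$ in the viscosity sense. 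Applying the interior Hölder estimate to $v_h$ therefore yields, for all small $|h|$,
\[
 [v_h]_{C^{\alpha_0}(\overline B_{3/4})} \le C \|v_h\|_{L^\infty(B_{7/8})} \le C\, |h|^{\alpha_0},
\]
which, after unwinding the definition of $v_h$ and taking $y = x - h$, gives the pointwise second-difference bound $|u(x+h) - 2u(x) + u(x-h)| \le C|h|^{2\alpha_0}$ for $x \in \overline B_{3/4}$.

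To finish, I would bootstrap. By the characterization of Hölder spaces in terms of second differences, the bound just obtained improves the regularity of $u$ from $C^{\alpha_0}$ to $C^{2\alpha_0}$ on $\overline B_{1/2}$ (perturbing $\alpha_0$ by a universal amount if necessary so that no multiple of $\alpha_0$ is exactly $1$). Feeding this back into the previous step gives $\|v_h\|_{L^\infty} \le C|h|^{2\alpha_0}$, hence a second-difference estimate of order $|h|^{3\alpha_0}$ and $u \in C^{3\alpha_0}$; iterating a finite, universally bounded number of times, the exponent exceeds $1$ and we conclude $u \in C^{1,\alpha}(\overline B_{1/2})$ for a universal $\alpha \in (0,1)$. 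The substantive content is entirely contained in the two cited ingredients --- the ``difference lies in the Pucci class'' fact, which is exactly where uniform ellipticity (F1) is used, and the Krylov--Safonov Hölder estimate; note that (F2) plays no role here. The only points requiring genuine care are keeping the radii separated, since $v_h$ solves the inequalities only on $B_{1-|h|}$, and avoiding the borderline integer Hölder exponents in the iteration, both of which are routine.
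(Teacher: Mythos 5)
The paper does not prove Proposition~\ref{c1alph}; it simply refers the reader to \cite{CC}. Your argument --- showing that the translate difference $v_h = u(\cdot+h)-u$ lies in the Pucci class, applying the Krylov--Safonov interior H\"older estimate, reading off a second-difference bound, invoking the Zygmund characterization of H\"older spaces, and bootstrapping a universally bounded number of times while avoiding the integer exponent --- is correct and is essentially the standard proof found in \cite{CC} (Chapter 5), so your proposal matches the intended source argument. The two points you flag as routine (nesting of radii, which shrink only a universally bounded number of times since $\alpha_0$ is universal, and staying off the Lipschitz borderline exponent) are indeed the only technical care points, and you handle them correctly.
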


Finally, we recall a consequence of the Calder\'on-Zygmund cube decomposition. This appeared in a slightly different form as \cite[Lemma 4.2]{CC}.

\begin{prop} \label{CZD}
Suppose that $D \subseteq E \subseteq Q_1$ are measurable and $0< \delta < 1$ is such that:
\begin{itemize}
\item $|D| \leq \delta |Q_1|$; and
\item if $x\in \Rd$ and $r> 0$ such that $Q_{x,3r} \subseteq Q_1$ and $|D\cap Q_{x,r}| \geq \delta |Q_{r}|$, then $Q_{x,3r} \subseteq E$.
\end{itemize}
Then $|D| \leq \delta |E|$.
\end{prop}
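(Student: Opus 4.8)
The plan is to apply the classical Calder\'on--Zygmund dyadic stopping-time decomposition to the set $D$ inside $Q_1$. Subdivide $Q_1$ dyadically, at each step splitting a cube into its $2^\d$ dyadic children, and keep splitting a cube for as long as the density inequality $|D\cap Q|>\delta|Q|$ fails on it. Since $|D|\le\delta|Q_1|$ the cube $Q_1$ is itself split, and the process produces a countable family of pairwise disjoint dyadic cubes $\{Q_k\}$, each of which is the first cube along its branch on which $|D\cap Q_k|>\delta|Q_k|$; consequently the dyadic predecessor $\tilde Q_k$ of $Q_k$ (which was split) satisfies $|D\cap\tilde Q_k|\le\delta|\tilde Q_k|$. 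By the Lebesgue differentiation theorem almost every point of $D$ is a point of Lebesgue density $1$ for $D$, hence of density exceeding $\delta$, and therefore lies in one of the selected cubes; thus $|D\setminus\bigcup_k Q_k|=0$.

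Next I would connect the selected cubes to the second hypothesis. Writing $Q_k=Q_{x_k,r_k}$, note the elementary inclusion $\tilde Q_k\subseteq Q_{x_k,3r_k}$: the predecessor has half side-length $2r_k$ and its center differs from $x_k$ by $r_k$ in each coordinate, so it lies in the concentric cube of half side-length $2r_k+r_k=3r_k$. Since $|D\cap Q_{x_k,r_k}|>\delta|Q_{r_k}|$, the hypothesis applied with $x=x_k$, $r=r_k$ gives $Q_{x_k,3r_k}\subseteq E$, hence $\tilde Q_k\subseteq E$, \emph{provided} $Q_{x_k,3r_k}\subseteq Q_1$. The predecessors $\{\tilde Q_k\}$ may coincide or be nested (two distinct selected cubes can share a predecessor, and one predecessor can be contained in another), so I would extract the maximal ones $\{R_j\}$; as maximal dyadic cubes they are pairwise disjoint, each $R_j$ equals some $\tilde Q_k$, and therefore $|D\cap R_j|\le\delta|R_j|$ and $R_j\subseteq E$. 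Moreover $\bigcup_k Q_k\subseteq\bigcup_k\tilde Q_k=\bigcup_j R_j$.

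The conclusion then follows by chaining estimates:
\begin{equation*}
|D| \;=\; \Bigl| D\cap\bigcup_k Q_k \Bigr| \;\le\; \Bigl| D\cap\bigcup_j R_j \Bigr| \;=\; \sum_j |D\cap R_j| \;\le\; \delta\sum_j |R_j| \;=\; \delta\Bigl|\bigcup_j R_j\Bigr| \;\le\; \delta|E|,
\end{equation*}
where the first equality uses $|D\setminus\bigcup_k Q_k|=0$, the two middle equalities use the disjointness of the $R_j$, and the final step uses $\bigcup_j R_j\subseteq E$.

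The stopping-time selection and the density argument are routine. I expect the genuine difficulty to be the interface with the second hypothesis: it yields $\tilde Q_k\subseteq E$ only for selected cubes whose threefold concentric dilation $Q_{x_k,3r_k}$ still fits inside $Q_1$, and this can fail for cubes abutting $\partial Q_1$ --- in particular it is never available for cubes of the first subdivision, whose predecessor is $Q_1$ itself. Near $\partial Q_1$ one must therefore replace the crude ``$R_j\subseteq E$'' step: since $D$ still has density exceeding $\delta$ inside such a boundary cube, one covers most of it by admissible dilated cubes $Q_{x,3r}\subseteq Q_1$ furnished by the hypothesis, and organizes these so that the total estimate still produces the sharp constant $\delta$. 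Making this boundary bookkeeping precise, rather than settling for a constant worse than $\delta$, is the delicate point of the argument.
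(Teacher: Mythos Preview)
The paper does not give its own proof of this proposition; it records it as a standard consequence of the Calder\'on--Zygmund cube decomposition and points to \cite{CC}, where the result appears (as Lemma~4.2 there) in its dyadic form: if $|D\cap Q|>\delta|Q|$ for a dyadic subcube $Q$ of $Q_1$, then the dyadic predecessor $\tilde Q$ lies in $E$. Your stopping-time argument is exactly the standard proof of that dyadic statement, and the chain
\[
|D|=\sum_j|D\cap R_j|\le\delta\sum_j|R_j|=\delta\Bigl|\bigcup_j R_j\Bigr|\le\delta|E|
\]
with the maximal predecessors $R_j$ is the correct way to finish.

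The boundary difficulty you isolate is real for the proposition as literally stated here, but your proposed resolution (``cover most of a boundary cube by admissible dilated cubes and organize the bookkeeping'') is vague and it is not clear it recovers the sharp constant~$\delta$. The cleaner observation is that the issue is an artifact of the reformulation, not of the underlying fact: in the dyadic version the hypothesis is asserted for every proper dyadic subcube with no requirement that a dilation stay inside $Q_1$, so your argument runs with no boundary case at all. The passage from the tripled-cube hypothesis here to the dyadic hypothesis in \cite{CC} is immediate for interior cubes via your inclusion $\tilde Q_k\subseteq Q_{x_k,3r_k}$, but for cubes touching $\partial Q_1$ one cannot read it off directly; in the paper's only use of this proposition (the proof of Proposition~\ref{w2ep}) this does not matter, since Lemma~\ref{w2heart} is applied inside a ball strictly larger than the cube playing the role of $Q_1$, so the needed implication holds for all dyadic subcubes, boundary or not. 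In short: your plan is the right one and matches the reference the paper cites; the boundary worry is legitimate for the statement as written but disappears once one reverts to the dyadic formulation that is actually proved in \cite{CC} and actually used here.
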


\section{$W^{2,\ep}$ estimate} \label{W2}
An integral estimate for the second derivatives of strong solutions of linear, uniformly elliptic equations in nondivergence form with only measurable coefficients was first obtained by Lin~\cite{L}. It was later extended to viscosity solutions of fully nonlinear equations in \cite{CC}.

\medskip

To state the estimate, we require some notation. Given a domain $\Omega \subseteq \Rd$ and a function $u\in C(\Omega)$, define the quantities
\begin{multline*}
\underline \Theta(x) = \underline \Theta(u,\Omega)(x): = \inf\big{\{} A \geq 0 : \ \mbox{there exists} \ p \in \Rd \ \mbox{such that for all} \ y \in \Omega, \\ 
u(y) \geq u(x) + p\cdot (x-y) - \tfrac12 A|x-y|^2 \big{\}},
\end{multline*}
\begin{multline*}
\overline \Theta(x) = \overline \Theta(u,\Omega)(x): = \inf\big{\{} A \geq 0 : \ \mbox{there exists} \ p \in \Rd \ \mbox{such that for all} \ y \in \Omega, \\ 
u(y) \leq u(x) + p\cdot (x-y) + \tfrac12 A|x-y|^2 \big{\}},
\end{multline*}
and
\begin{equation*}
\Theta(x) : = \Theta(u,\Omega)(x) = \max\left\{ \underline \Theta(u,\Omega)(x), \overline \Theta(u,\Omega)(x) \right\}.
\end{equation*}
The quantity $\underline \Theta(x)$ is the minimum curvature of any paraboloid that touches $u$ from below at $x$. If $u$ cannot be touched from below at $x$ by any paraboloid, then $\underline \Theta(x) = +\infty$. A similar statement holds for $\overline \Theta(x)$, where we touch from above instead.

\medskip

The form of the $W^{2,\ep}$ estimates we need is contained in the following proposition.

\begin{prop} \label{w2ep}
If $u \in C(B_1)$ satisfies the inequality
\begin{equation}
\Pucci(D^2u) \geq 0 \quad \mbox{in} \ B_1,
\end{equation}
then for all $t>t_0 \sup_{B_1} |u|$, 
\begin{equation} \label{w2epest}
\left| \left\{ x\in B_{1/2} : \underline\Theta(u,B_1)(x) > t\right\} \right| \leq Ct^{-\ep},
\end{equation}
where the constants $C,t_0, \ep > 0$ are universal.
\end{prop}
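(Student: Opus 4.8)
The plan is to derive the estimate \eqref{w2epest} from the ABP inequality (Proposition~\ref{ABP}) combined with the Calder\'on--Zygmund cube decomposition (Proposition~\ref{CZD}), following the now-classical scheme of Caffarelli and Cabr\'e. The starting observation is that the set $\{\underline\Theta(u,B_1)(x) > t\}$ should decay geometrically in $t$ along a dyadic sequence $t = M^k$: if I can show that for a suitable universal $M>1$ and $\delta<1$,
\begin{equation*}
\left| \left\{ x\in B_{1/2} : \underline\Theta(u,B_1)(x) > M^{k+1}\right\} \right| \leq \delta \left| \left\{ x\in B_{1/2} : \underline\Theta(u,B_1)(x) > M^{k}\right\} \right|,
\end{equation*}
then iterating gives $|\{\underline\Theta > M^k\}| \leq \delta^k |B_{1/2}|$, which is precisely the bound $Ct^{-\ep}$ with $\ep = \log(1/\delta)/\log M$ after interpolating between consecutive scales. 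So the whole proof reduces to establishing the geometric decay step, and this is where Proposition~\ref{CZD} enters: I would set $E_k := \{x\in Q_1 : \underline\Theta(u,Q_1)(x) > M^k\}$ (working with cubes rather than balls after a harmless rescaling), and verify the two hypotheses of Proposition~\ref{CZD} with $D = E_{k+1}$ and $E = E_k$.

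The first hypothesis, $|E_1| \leq \delta|Q_1|$, is where the ABP inequality does its work. One normalizes so that $\sup_{B_1}|u|$ is small (this is the role of the threshold $t > t_0\sup_{B_1}|u|$); then the points where $u$ can be touched from below by a paraboloid of opening at most $M$ form a large-measure set, and on the complement ABP forces a contradiction with the subsolution inequality $\Pucci(D^2u)\geq 0$. Concretely, one slides paraboloids $-\tfrac12 M|x-x_0|^2$ (plus linear functions) from below and uses that the contact set, where the convex envelope agrees with $u$, must carry most of the mass; on the contact set $D^2u$ is controlled, and the ABP estimate applied to $u$ minus such a paraboloid quantifies how much of $B_{1/2}$ lies in the good set. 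This gives $|E_1 \cap B_{1/2}| \leq \delta|Q_1|$ for $M$ large universal.

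The second hypothesis is the scaling/localization step: if $Q_{x,3r}\subseteq Q_1$ and $|E_{k+1}\cap Q_{x,r}| \geq \delta|Q_r|$, I must conclude $Q_{x,3r}\subseteq E_k$, i.e. $\underline\Theta(u,Q_1)(y) > M^k$ for every $y\in Q_{x,3r}$. I argue by contrapositive: if some $y_0\in Q_{x,3r}$ has $\underline\Theta(u,Q_1)(y_0)\leq M^k$, then $u$ is touched from below on all of $Q_1$ by a paraboloid of opening $M^k$; subtracting this paraboloid and rescaling $Q_{x,r}$ to the unit cube produces a new function $\tilde u$ that still satisfies $\Pucci(D^2\tilde u)\geq 0$ with the right normalization, and to which the first step applies — yielding $|E_1(\tilde u)\cap \tilde B_{1/2}|\leq\delta$, which translates back to $|E_{k+1}\cap Q_{x,r}| < \delta|Q_r|$, a contradiction. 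The key compatibility fact is that $\underline\Theta$ behaves correctly under subtracting a paraboloid and under rescaling: $\underline\Theta(u - P, Q_1) = \underline\Theta(u,Q_1) \mp (\text{opening of }P)$ up to triangle-inequality slack, and dilating $x\mapsto x/r$ scales $\underline\Theta$ by $r^2$. I expect the main obstacle to be bookkeeping these affine normalizations carefully — tracking how the $\sup|u|$ threshold $t_0$, the opening $M$, and the domain $B_1$ versus $Q_1$ interact under the iteration — rather than any conceptual difficulty, since each individual ingredient (ABP, CZ decomposition) is supplied. A secondary subtlety is that $\underline\Theta$ is defined using paraboloids valid on the whole domain $B_1$, not just locally, so the localization in the second hypothesis must respect the fixed reference domain; this is handled by the monotonicity $\underline\Theta(u,Q_1)(x) \leq \underline\Theta(u,Q_{x,r})(x)$ and a careful choice of which domain to test against at each stage.
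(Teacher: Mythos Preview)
Your proposal is correct and follows essentially the same scheme as the paper: the paper isolates your ``first hypothesis'' and ``second hypothesis'' as a single key lemma (Lemma~\ref{w2heart}), proved via the ABP inequality applied to $u$ plus a barrier function from \cite[Lemma~4.1]{CC}, and then applies the contrapositive of that lemma through Proposition~\ref{CZD} to obtain the geometric decay \eqref{dec} and hence \eqref{w2epest}. Your informal description of sliding paraboloids and tracking the contact set is exactly what the barrier construction formalizes, and your remarks about the scaling of $\underline\Theta$ and the need to keep the reference domain $B_1$ fixed under localization match the paper's concerns.
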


Obviously \eqref{w2epest} implies that for any $0< \hat \ep < \ep$,
\begin{equation}
\int_{B_{1/2}} \left( \underline\Theta(u,B_1)(x)\right)^{\hat\ep}\, dx \leq C \sup_{B_1} |u|^{\hat\ep},
\end{equation}
where the constant $C$ depends additionally on a lower bound for $\ep - \hat \ep$. 

\medskip

Proposition~\ref{w2ep} is stated differently than the corresponding estimate in \cite{CC}. We emphasize here that $\underline\Theta(u,\Omega)(x)$ is defined in terms of quadratic polynomials which touch $u$ at $x$ and stay below $u$ in the full domain $\Omega$. That Proposition~\ref{w2ep} is stated in terms of such a quantity is crucial to its application in the proof of Theorem~\ref{main}. Indeed, if instead of Proposition~\ref{w2ep} we had the weaker statement that $u$ is twice differentiable at almost every point, and $|D^2 u| \in L^\ep$, then this would be insufficient to prove the partial regularity result. 

\medskip

For completeness and because of our alternative formulation, we give a simplified proof of Proposition~\ref{w2ep} following the along the lines of the argument in~\cite{CC}. The heart of the proof is following consequence of the ABP inequality. We recall that $\overline Q_1 \subset Q_3$ and $\overline Q_3\subset B_{6\sqrt\d}$.
\begin{lem} \label{w2heart}
Assume that $\Omega \subseteq \Rd$ is open and $\overline B_{6\sqrt\d} \subseteq \Omega$. Suppose $u\in C(\Omega)$ satisfies
\begin{equation}
\Pucci(D^2u) \geq 0 \quad \mbox{in} \ \Omega,
\end{equation}
such that for some $t> 0$,
\begin{equation*}
\{ \underline \Theta(u,\Omega) \leq t \} \cap Q_3 \neq \emptyset.
\end{equation*}
Then there are universal constants $M>1$ and $\sigma > 0$ such that 
\begin{equation} \label{w2hrtest}
\left| \{ \underline \Theta(u,\Omega) \leq M t \} \cap Q_1 \right| \geq \sigma > 0.
\end{equation}
\end{lem}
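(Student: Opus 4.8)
I would prove the lemma via the ABP-plus-barrier argument of \cite{CC}, in four steps.

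\emph{Normalization.} First choose $x_0\in Q_3$ realizing $\underline\Theta(u,\Omega)(x_0)\le t$, so that $u(y)\ge u(x_0)+p\cdot(x_0-y)-\tfrac t2|x_0-y|^2$ on $\Omega$ for some $p$. Subtracting the affine function $y\mapsto u(x_0)+p\cdot(x_0-y)$ (which affects neither $\Pucci(D^2u)$ nor $\underline\Theta(u,\Omega)$) and dividing by $t$ (which scales $\underline\Theta$ by $1/t$), I may assume $t=1$, $u(x_0)=0$, and
\[
u(y)\ \ge\ \varphi(y):=-\tfrac12|x_0-y|^2 \qquad\text{for all } y\in\Omega .
\]
In particular $u\ge -C_0$ on $\overline{B_{6\sqrt\d}}$ for a universal $C_0$, so $\bar u:=(u+C_0)/C_0\ge 0$ on $B_{6\sqrt\d}$, $\bar u(x_0)=1$, and $\Pucci(D^2\bar u)\ge 0$.

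\emph{Barrier and ABP.} Fix a $C^2$ barrier $\beta$ (the standard one; see \cite[Ch.~4]{CC}) with $\beta\le -2$ on $\overline{Q_3}$, $\beta\ge 0$ off $B_{6\sqrt\d}$, $|\beta|+|D^2\beta|\le C$ on $\Rd$, and $\pucci(D^2\beta)\ge -C\mathbf 1_{K}$ for a fixed ball $K$ with $\overline K\subseteq Q_1$; set $w:=\bar u+\beta$ on $B_{6\sqrt\d}$. Then $w\ge 0$ on $\partial B_{6\sqrt\d}$, $\inf_{B_{6\sqrt\d}}w\le w(x_0)=1+\beta(x_0)\le -1$, and by the elementary inequality $\Pucci(X+Y)\ge\Pucci(X)+\pucci(Y)$ we get $\Pucci(D^2w)\ge\Pucci(D^2\bar u)+\pucci(D^2\beta)\ge -C\mathbf 1_{K}$ in the viscosity sense. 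Applying the ABP inequality (Proposition~\ref{ABP}) to $w$, and using that the right-hand side is supported in $K$, yields
\[
1\ \le\ \sup_{B_{6\sqrt\d}}w^-\ \le\ C\,\bigl|\{\Gamma_w=w\}\cap K\bigr|^{1/\d},
\]
so $G_0:=\{\Gamma_w=w\}\cap K$ has $|G_0|\ge c>0$. At each $z\in G_0$ the convex envelope $\Gamma_w$ is touched by an affine function from below on $B_{6\sqrt\d}$; since $\beta$ is smooth with $|D^2\beta|\le C$, it follows that $\bar u=w-\beta$---hence $u=C_0\bar u-C_0$---is touched from below at $z$ by a concave paraboloid of some universal opening $C_1$ that stays below $u$ on all of $B_{6\sqrt\d}$, i.e.\ $\underline\Theta(u,B_{6\sqrt\d})(z)\le C_1$ for every $z\in G_0$.

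\emph{Passing from $B_{6\sqrt\d}$ to $\Omega$.} This is the step I expect to be the crux. Write the paraboloid from the previous step as $Q_z(y)=u(z)+p_z\cdot(y-z)-\tfrac{C_1}2|y-z|^2$; the construction bounds $|u(z)|$ and $|p_z|$ by universal constants. For a large universal $M$ set $\tilde Q_z(y):=Q_z(y)-\tfrac{M-C_1}2|y-z|^2$. Then $\tilde Q_z\le Q_z\le u$ on $B_{6\sqrt\d}$ and $\tilde Q_z(z)=u(z)$, while $\varphi-\tilde Q_z$ is a quadratic polynomial with leading coefficient $\tfrac{M-1}2>0$ and lower-order coefficients bounded independently of $M$; hence for $M$ large it is nonnegative on $\{|y|\ge 6\sqrt\d\}$, so $\tilde Q_z\le\varphi\le u$ on $\Omega\setminus B_{6\sqrt\d}$ as well. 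Therefore $\underline\Theta(u,\Omega)(z)\le M$ for all $z\in G_0$, and since $G_0\subseteq K\subseteq Q_1$ has measure $\ge c$, we obtain $|\{\underline\Theta(u,\Omega)\le M\}\cap Q_1|\ge c$; undoing the normalization gives \eqref{w2hrtest} with $\sigma=c$ (and $M$, $\sigma$ universal).

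\emph{Where the difficulty lies.} The lemma is a statement about $\underline\Theta(u,\Omega)$---paraboloids valid on the whole domain---whereas ABP only supplies paraboloids valid on $B_{6\sqrt\d}$. What makes the upgrade possible is precisely the hypothesis encoded in $\underline\Theta(u,\Omega)(x_0)\le t$: it gives a global lower barrier $u\ge\varphi$ of bounded opening, and a sufficiently concave paraboloid lies beneath $\varphi$ outside $B_{6\sqrt\d}$, so enlarging $C_1$ to a large universal $M$ restores global validity. The other ingredient---standard and citable---is the barrier $\beta$ whose $\pucci$-operator is concentrated in $K\subseteq Q_1$; this is what forces the contact set into $Q_1$, and thereby accounts for the passage from $Q_3$ to $Q_1$.
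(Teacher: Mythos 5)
Your proof is correct and follows essentially the same route as the paper's: normalize so that a global paraboloid of unit opening lies below $u$ and touches at $x_0$, apply ABP with the localized barrier from \cite[Lemma 4.1]{CC} to find a contact set of positive measure inside $Q_1$, and then at each contact point enlarge the opening of the locally touching paraboloid so that it dips under the global lower barrier outside $B_{6\sqrt\d}$, upgrading the bound on $\underline\Theta(\cdot,B_{6\sqrt\d})$ to a bound on $\underline\Theta(\cdot,\Omega)$. The only cosmetic difference is that you shift $u$ to be nonnegative and add the barrier at unit scale, while the paper keeps $u\geq P\geq 0$ directly from the normalization and instead multiplies the barrier by a large constant $A$; this does not change the substance of the argument.
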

\begin{proof}
Since the operator $\Pucci$ and the quantity $\underline \Theta$ are positively homogeneous, we may assume that $t =1$. By adding an affine function to $u$, we may suppose that the paraboloid
\begin{equation*}
P(x): = \frac12 \left( 36 \d -  |x|^2 \right)
\end{equation*}
touches $u$ from below at some point $x_0 \in Q_3$, that is,
\begin{equation*}
\inf_{\Omega}  (u-P) = u(x_0) - P(x_0) = 0.
\end{equation*}
In particular,
\begin{equation*}
u \geq P \geq 0 \quad \mbox{in} \ B_{6 \sqrt\d} \qquad  \mbox{and} \qquad u(x_0) = P(x_0) \leq \sup_{Q_3} P = 18 \d.
\end{equation*}

According to \cite[Lemma 4.1]{CC}, there exist smooth functions  $\varphi$ and $\xi$ on $\Rd$ and universal constants $C$ and $K> 1$ such that
\begin{equation}
\left\{ \begin{aligned}
& \Pucci(D^2\varphi) \geq - C \xi \quad \mbox{in} \ \Rd, \\
& 0 \leq \xi \leq 1, \quad \xi \equiv 0 \ \  \mbox{on} \ \ \Rd \!\setminus \! Q_1,\\
& \varphi \geq -K \ \  \mbox{in} \ \Rd, \  \ \varphi \geq 0 \ \ \mbox{in} \ \Rd \!\setminus \! B_{6\sqrt{\d}},  \ \ \mbox{and} \ \ \varphi \leq -1 \ \ \mbox{in} \ Q_3.
&
\end{aligned}
\right.
\end{equation}
Define $w: = u + A\varphi$, with $A> 0$ selected below. It is easy to check that $w$ satisfies
\begin{equation*}
\left\{ \begin{aligned}
& \Pucci(D^2w) \geq -C A \xi & \mbox{in} & \ B_{6\sqrt\d}, \\
& w \geq 0 & \mbox{on} & \ \partial B_{6\sqrt\d}.
\end{aligned} \right.
\end{equation*}
Let $\Gamma_w$ denote the convex envelope of $- w^- \chi_{6 \sqrt \d}$ in $B_{12\sqrt\d}$. According to the ABP inequality (Proposition~\ref{ABP} above),
\begin{equation*}
-18 n + A \leq -u(x_0)-\varphi(x_0) \leq \sup_{B_{6\sqrt\d}} w^- \leq C A \left| \left\{ x\in Q_1 : \Gamma_w(x) = w(x) \right\} \right|.
\end{equation*}
Choosing $A = 19n$ yields
\begin{equation*}
\left| \left\{ x\in Q_1 : \Gamma_w(x) = w(x) \right\} \right| \geq \sigma, 
\end{equation*}
for a universal constant $\sigma> 0$. We also have that $u^- \leq C$ in $B_{6\sqrt\d}$. We finish the proof by showing that, for some universal constant $M > 0$, 
\begin{equation} \label{finhrtl}
\left\{ x\in Q_1 : \Gamma_w(x) = w(x) \right\} \subseteq \left\{ x \in Q_1 : \underline \Theta(u,\Omega)(x) \leq M \right\}.
\end{equation}
If $x\in Q_1$ is such that $\Gamma_w (x) = w(x)$, then since $\Gamma_w$ is convex and negative in $B_{12\sqrt\d}$, there exists an affine function $L$ that touches $\Gamma_w$, and hence $w$, from below at $x$. It follows that $L \leq 0$ in $B_{12\sqrt\d}$. We have 
\begin{equation*}
L -A\varphi \leq u \quad \mbox{in} \ B_{6\sqrt\d}
\end{equation*}
with equality holding at $x$. Since $L(x)= u(x) + A \varphi(x) \geq -KA$ and $L\geq 0$ in $B_{12 \sqrt \d}$, we deduce that $DL \leq KA/(6\sqrt\d)$. Since $|D^2\varphi|$ is bounded by a universal constant, we can find a concave paraboloid $\widetilde P$ with opening $M$, with $M$ universal, such that $\widetilde P \leq u$ in $B_{6\sqrt\d}$ and equality holding at $x$. Since $\dist(Q_1,\Rd \setminus B_{6\sqrt\d} ) \geq 5 \sqrt\d$ and $|DL|\leq C$, by making $M$ larger if necessary, we may assume that $\widetilde P \leq P$ on the set $\Rd \setminus B_{6\sqrt\d}$. Hence $\widetilde P \leq P \leq u$ on $\Omega$. Therefore $\widetilde P \leq u$ on $\Omega$ with equality holding at $x$. This completes the proof of \eqref{finhrtl}.
\end{proof}

We now prove Proposition~\ref{w2ep} by applying Proposition~\ref{CZD} to the contrapositive of Lemma~\ref{w2heart}.

\begin{proof}[Proof of Proposition~\ref{w2ep}]
According to the previous lemma, there are universal constants $M,\sigma > 0$, such that, for all $t> 0$ and $Q_{x,3r} \subseteq Q_{1/6\sqrt\d}$, we have
\begin{equation*}
\left| \{ \underline \Theta(u,B_1) > Mt \} \cap Q_{x,r} \right| > (1-\sigma)|Q_{r}| \qquad \mbox{implies that} \qquad \underline \Theta > t \quad \mbox{in} \ Q_{x,3r}.
\end{equation*}
Since $u$ is bounded, we can touch it from below in $Q_{1/2\sqrt\d}$ by a paraboloid with an opening proportional to $\sup_{B_1}|u|$, and hence for some $x \in Q_{1/2\sqrt\d}$,
\begin{equation*}
\underline \Theta(u,B_1)(x) \leq C \sup_{B_1} |u|.
\end{equation*}
Then according to Lemma~\ref{w2heart}, there exists a universal $t_0$ such that, for all $t> t_0 \sup_{B_1} |u|$,
\begin{equation*}
\left| \{ \underline \Theta(u,B_1) > Mt \} \cap Q_{1/6\sqrt\d} \right| \leq (1-\sigma) |Q_{1/6\sqrt\d}|.
\end{equation*}
It follows from Lemma~\ref{CZD} that, for every $t>t_0 \sup_{B_1} |u|$,
\begin{equation} \label{dec}
\left| \{ \underline \Theta(u,B_1) > Mt \} \cap Q_{1/6\sqrt\d} \right| \leq (1-\sigma) \left| \{ \underline \Theta(u,B_1) > t \} \cap Q_{1/6\sqrt\d} \right|.
\end{equation}
By iterating \eqref{dec}, we obtain a universal constants $C,\ep > 0$ such that for all $t>t_0\sup_{B_1} |u|$,
\begin{equation*}
\left| \{ \underline \Theta(u,B_1) > t \} \cap Q_{1/6\sqrt\d} \right| \leq Ct^{-\ep}.
\end{equation*}
The proposition now follows from an easy covering argument.
\end{proof}

\begin{remark} \label{eprunaway}
It is natural to wonder what, if anything, can be said about the exponent $\ep$ in Proposition~\ref{w2ep}. By constructing an explicit example, we will show that $\ep \to 0$ as the ellipticity $\Lambda / \lambda \to \infty$.

\medskip

Fix $\alpha, R>0$, and define the function $u$ in $\R^2 \setminus \{ 0 \}$ by
\begin{equation*}
u(x) := \begin{cases}
R^{\alpha+2} |x|^{- \alpha} + \frac \alpha 2 |x|^2 - (1+\frac \alpha 2) R^2 & \mbox{if } 0 < |x| < R, \\
0 & \mbox{if } |x| \geq R.
\end{cases}
\end{equation*}
Observe that $u\in C^1(\R^2\setminus \{ 0 \})$. An easy computation confirms that for $0 < |x| < R$,
\begin{equation*}
D^2u(x) = \alpha(\alpha+2) R^{\alpha+2} |x|^{-\alpha-4} x \otimes x - \alpha|x|^{-\alpha-2}( R^{\alpha+2}-|x|^{\alpha+2} )\iden,
\end{equation*}
from which we can see that, in the punctured ball $0<|x|<R$, the eigenvalues of $D^2u(x)$ are $-\alpha|x|^{-\alpha-2}(R^{\alpha+2}-|x|^{\alpha+2})$ and $\alpha|x|^{-\alpha-2}(|x|^{\alpha+2}+ (\alpha+1)R^{\alpha+2})$. Therefore, 
\begin{multline} \label{epruneq}
\Pucci(D^2 u) = \Lambda \alpha|x|^{-\alpha-2}(R^{\alpha+2}-|x|^{\alpha+2}) - \lambda\alpha|x|^{-\alpha-2} \left( |x|^{\alpha+2} + (\alpha+1)R^{\alpha+2} \right) \\
\geq -(\Lambda+\lambda)\alpha  \geq -2\Lambda \alpha \quad \mbox{in} \ B_R \setminus \{ 0 \},
\end{multline}
provided that $0 < \alpha \leq \Lambda/\lambda -1$.

Since $u \equiv 0$ in $\R^2 \setminus B_R$, the inequality \eqref{epruneq} also holds in $\R^2\setminus B_R$. Using that $u \in C^1(\R^2 \setminus \{ 0 \})$, it follows that the inequality \eqref{epruneq} holds in the viscosity sense in $\R^2 \setminus \{0 \}$. 

For any neighborhood $N$ of $x \in B_R \setminus \{ 0 \}$, we have
\begin{equation} \label{bump-u}
\underline \Theta(u,N)(x) \geq \alpha|x|^{-\alpha-2} (R^{\alpha+2} - |x|^{\alpha+2}).
\end{equation}
This is easily deduced from the fact that if $\varphi$ is a smooth function touching $u$ from below at $x$, then $D^2\varphi(x) \leq D^2u(x)$, and the latter has an eigenvalue of $-\alpha|x|^{-\alpha-2} (R^{\alpha+2} - |x|^{\alpha+2})$. It follows that
\begin{equation*}
\underline \Theta(u,N)(x) \geq c \alpha R^{\alpha+2} |x|^{-\alpha-2} \quad \mbox{in} \ B_{R/2} \setminus \{ 0 \}
\end{equation*}
where $c$ depends only on $\alpha$. 

We build the example by making $R > 0$ small and replicating the function $u$.
\begin{equation*}
v(x) := - |x|^2 + \sum_{y \in \mathbb Z^2} \min\left(1, \frac {\lambda}{\Lambda \alpha} u(x-2Ry) \right)
\end{equation*}
Note that for $R$ small, the minimum inside the summation takes the second value when $|x-2Ry| > cR^{(\alpha+2)/\alpha}$.

It is routine to check that
\begin{equation*}
|v| \leq 1 \quad \mbox{in} \ B_1 \qquad \mbox{and} \qquad \Pucci(D^2 v) \geq 0 \quad \mbox{in}\  \R^2.
\end{equation*}
Now fix $\ep > 0$, and suppose that $\alpha > 0$ is large enough that $(\alpha+2)\ep > 2$. This can be arranged if the ellipticity satisfies $(\Lambda/\lambda+1)\ep > 2$. Using \eqref{bump-u}, it follows that for any $y \in \mathbb{Z}^2$ with $B(2Ry,R) \subseteq B_{1/2}$,
\begin{align*}
\int_{B(2Ry,R)} \big( \underline \Theta(v,B_1)(x)\big)^\ep \, dx &= \int_{B_R} \big( \underline \Theta(v,B_1)(x)\big)^\ep \, dx \\
& \geq \int_{B_{R/2} \setminus B_{cR^{(2+\alpha)/\alpha}}} \left( \underline \Theta(\tfrac1{\lambda \alpha}u,B_{1/2})(x-2Ry) \right)^{\ep}\, dx \\
&\geq \int_{B_{R/2} \setminus B_{cR^{(2+\alpha)/\alpha}}} \Big(c(\alpha,\lambda) R^{\alpha+2}|x|^{-\alpha-2}\Big)^\ep \, dx \\
& = 2\pi c(\alpha,\lambda)^\ep R^{(\alpha+2)\ep} \int_{cR^{(\alpha+2)/\alpha}}^{R/2} t^{-(\alpha+2)\ep +1} \, dt \\
&\geq c(\lambda,\alpha)^\ep R^{2(\alpha+2)(1-\ep)/\alpha} \qquad \text{for small } R.
\end{align*}
There exist $c/R^2$ disjoint balls of the form $B(2Ry,R)$, with $y \in \mathbb Z^2$, inside $B_{1/2}$. Therefore,
\[ \int_{B_{1/2}} \big( \underline \Theta(v,B_1)(x)\big)^\ep \, dx
\geq c(\alpha,\lambda)^\ep R^{2(\alpha+2)(1-\ep)/\alpha-2} = c(\alpha,\lambda)^\ep R^{2(2-(\alpha+2)\ep)/\alpha} . \]
Observe that the exponent $2(2-(\alpha+2)\ep)/\alpha < 0$. Thus $\| \underline \Theta(v,B_1) \|_{L^\ep(B_{1/2})}  \to +\infty$ as $R \to 0$, keeping $\lambda$, $\Lambda$, $\alpha$, and $\ep$ fixed.

This demonstrates that the $W^{2,\ep}$ estimate as stated in Proposition~\ref{w2ep} is false in dimension $n=2$ if we have $(\Lambda/\lambda+1)\ep > 2$. It is false in all dimensions $n\geq 2$, for the same range of $\ep$ and $\Lambda/\lambda$, since we may add dummy variables to our example at no cost. In particular, the exponent $\ep$ in Proposition~\ref{w2ep} is never greater than 1.
\end{remark}

\begin{conj} \label{ASSconj}
The optimal exponent in Proposition~\ref{w2ep} is $\ep = 2(\Lambda/\lambda+1)^{-1}$.
\end{conj}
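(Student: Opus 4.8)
This last assertion is a conjecture, not a theorem, so what follows is a plan of attack rather than an argument. It is natural to separate it into two halves: that the estimate \eqref{w2epest} \emph{fails}, for every universal constant $C$, as soon as $\ep \geq 2(\Lambda/\lambda+1)^{-1}$; and that it \emph{holds} for every $\ep < 2(\Lambda/\lambda+1)^{-1}$.

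The first half is essentially contained in Remark~\ref{eprunaway}. The function $v$ constructed there, obtained by periodically replicating the radial profile $u$ whose opening exponent is $\alpha = \Lambda/\lambda - 1$, satisfies $\Pucci(D^2v) \geq 0$ and $|v| \leq 1$ in $B_1$, and near each bump center $2Ry$ the bound \eqref{bump-u} gives $\underline\Theta(v,B_1)$ bounded below by a constant multiple of $R^{\alpha+2}|x - 2Ry|^{-\alpha-2}$ on an annulus; since $\alpha+2 = \Lambda/\lambda+1$, the set $\{\underline\Theta(v,B_1) > t\}\cap B_{1/2}$ has measure of order $R^{-2} t^{-2/(\Lambda/\lambda+1)}$ for $t$ large, so the computation of the remark shows that $\|\underline\Theta(v,B_1)\|_{L^\ep(B_{1/2})} \to \infty$ as $R \to 0$ whenever $(\Lambda/\lambda+1)\ep \geq 2$ (the borderline exponent producing a logarithmic, rather than a power, divergence). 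Adjoining dummy variables yields the same in every dimension $\d \geq 2$. Hence no universal $C$ can be valid in \eqref{w2epest} when $\ep \geq 2(\Lambda/\lambda+1)^{-1}$. (The value $2(\Lambda/\lambda+1)^{-1}$ also passes the check $\Lambda = \lambda$: there $\Pucci(D^2u) \geq 0$ merely says that $u$ is superharmonic, and a truncated fundamental solution of the Laplacian shows that the exponent cannot exceed $1 = 2(\Lambda/\lambda+1)^{-1}$.)

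The second half is the genuinely open problem. The proof of Proposition~\ref{w2ep} given above produces only \emph{some} universal $\ep > 0$: iterating \eqref{dec} yields the exponent $\ep = \log(1/(1-\sigma))/\log M$, which is far from optimal because both the contact-set lower bound $\sigma$ extracted from the ABP inequality and the blow-up factor $M$ for the opening in \eqref{finhrtl} are wasteful. The plan would be to replace Lemma~\ref{w2heart} by a \emph{sharp spreading estimate}: if $\{\underline\Theta(u,\Omega) \leq t\} \cap Q_3 \neq \emptyset$, then $|\{\underline\Theta(u,\Omega) \leq Mt\} \cap Q_1| \geq (1 - CM^{-2/(\Lambda/\lambda+1)})|Q_1|$ for a universal $C$; an elementary iteration shows that any such estimate implies \eqref{w2epest} for every $\ep < 2(\Lambda/\lambda+1)^{-1}$. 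The natural device for producing the exact power $M^{-2/(\Lambda/\lambda+1)}$ is to run the barrier/ABP argument of Lemma~\ref{w2heart} against the \emph{extremal two-dimensional profile} $\Phi(x) = |x'|^{-(\Lambda/\lambda-1)}$ with $x' = (x_1,x_2)$, which exactly saturates $\Pucci(D^2\Phi) = 0$ away from the plane $\{x' = 0\}$ and is scale-invariant: because $\Phi$ is self-similar, the geometry of its contact set, hence the spreading rate, is governed precisely by $2(\Lambda/\lambda+1)^{-1}$, and the codimension-two location of its singularity is exactly what makes that exponent independent of $\d$. Equivalently, one would want a rearrangement principle stating that, among all $u$ with $\Pucci(D^2u) \geq 0$ and a fixed sup norm, the distribution function of $\underline\Theta$ is largest for a periodic array of copies of $\Phi$. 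A second, possibly equivalent, route is to transfer the question --- by differentiating \eqref{ueeq} and invoking ABP --- to the sharp weak-$L^p$ integrability of the Green's function of a linear nondivergence equation with ellipticity $\lambda\iden \leq (a^{ij}) \leq \Lambda\iden$, in the spirit of the results of Fabes--Stroock and Escauriaza.

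The main obstacle is that every known proof of a $W^{2,\ep}$ estimate, or of the weak Harnack inequality underlying it, loses a fixed amount in the exponent at the measure-theoretic (Calder\'on--Zygmund covering) step, and it is unclear that any iteration of this type can be made geometrically sharp. That the conjectured exponent is dimension-free is itself a warning that a new mechanism is needed --- one using the rigidity of the Pucci operators, whose extremal coefficients take only the two values $\lambda$ and $\Lambda$, rather than a soft covering --- and since the worst configuration concentrates on a set of codimension two it already lies below the resolution of volume-based methods. A renormalization or self-improvement scheme built around the self-similar barrier $\Phi$ appears to be the most promising avenue, but we do not know how to carry it through.
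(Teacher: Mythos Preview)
Your recognition that this is a conjecture, not a theorem, is exactly right: the paper offers no proof and merely remarks afterward that the case $\Lambda=\lambda$ is not difficult. Your proposal is thus appropriately a discussion rather than an argument, and the overall picture you paint---upper bound from the explicit example of Remark~\ref{eprunaway}, lower bound genuinely open, with the ABP/Calder\'on--Zygmund machinery too lossy to reach the sharp exponent---matches the state of affairs.

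Two small corrections. First, your measure computation $|\{\underline\Theta(v,B_1)>t\}\cap B_{1/2}|\sim R^{-2}t^{-2/(\Lambda/\lambda+1)}$ carries a spurious $R^{-2}$: there are $\sim R^{-2}$ bumps in $B_{1/2}$, but each contributes area $\sim R^{2}\,t^{-2/(\Lambda/\lambda+1)}$, and these factors cancel. The resulting bound is uniform in $R$, so the example does \emph{not} rule out the weak-type estimate \eqref{w2epest} at the endpoint $\ep=2(\Lambda/\lambda+1)^{-1}$; it only excludes strictly larger $\ep$, which is all Remark~\ref{eprunaway} claims. (The logarithmic divergence you invoke concerns the strong $L^{\ep}$ norm, not \eqref{w2epest}.) Second, your treatment of the case $\Lambda=\lambda$ addresses only the upper bound; the paper's remark that this case is ``not difficult'' refers to the full conjecture, i.e.\ also to showing that $\underline\Theta\in L^{1,\infty}$ for superharmonic functions---an exercise via the Riesz decomposition, but not something the fundamental-solution example alone delivers.
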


It is not difficult to show that Conjecture~\ref{ASSconj} is true in the case that $\Lambda=\lambda$.

\section{$C^{2,\alpha}$ regularity for flat solutions} \label{FL}
We present a refinement of a result of Savin \cite{S}, which states that a viscosity solution of a uniformly elliptic equation that is sufficiently close to a quadratic polynomial is, in fact, a classical solution.

\begin{prop} \label{flatreg}
Assume in addition that $F(0) = 0$. Suppose that $0 < \alpha < 1$ and $u\in C(B_1)$ is a solution of \eqref{ueeq} in $B_1$. Then there exists a universal constant $\delta_0 = \delta_0(\alpha)> 0$, depending also on $\alpha$, such that 
\begin{equation*}
\| u \|_{L^\infty(B_1)} \leq \delta_0 \quad \mbox{implies that} \quad u \in C^{2,\alpha}(B_{1/2}).
\end{equation*}
Moreover, the following estimate holds
\[ ||u||_{C^{2,\alpha}(B_{1/2})} \leq C |u|_{L^\infty}. \]
\end{prop}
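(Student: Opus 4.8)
The plan is to follow Savin's compactness-and-iteration scheme, adapted to the $C^1$ operator setting. The key observation is that if $u$ is a solution of $F(D^2u)=0$ with $F(0)=0$, and $u$ is very flat (close to zero, hence close to the quadratic polynomial $P\equiv 0$), then the equation linearizes: setting $A_0 := DF(0)$, which by (F1) satisfies $\lambda I_n \leq A_0 \leq \Lambda I_n$, one expects $u$ to be approximated in $B_{1/2}$ by a solution of the constant-coefficient linear equation $\tr(A_0 D^2 h)=0$. Such $h$ enjoys interior $C^3$ (indeed $C^\infty$) estimates, so $h$ is within $C\|u\|_{L^\infty}\rho^3$ of its quadratic Taylor polynomial $Q$ at $0$ on $B_\rho$. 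The heart of the matter is then a one-step improvement lemma: there exist a universal $\rho \in (0,1/2)$ and, for each flat solution $u$, a quadratic polynomial $Q$ with $\tr(A_0 D^2 Q)=0$, $\|Q\|\lesssim \|u\|_{L^\infty}$, such that the rescaled function $\tilde u(x) := \rho^{-2}\bigl(u(\rho x) - Q(\rho x)\bigr)$ is again a flat solution of an equation of the same type (with the same ellipticity and with $\tilde F(0)=0$), and $\|\tilde u\|_{L^\infty(B_1)} \leq \rho^{\alpha}\|u\|_{L^\infty(B_1)}$, after possibly shrinking $\delta_0$.

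First I would set up the linearization estimate by compactness: suppose not, then there are solutions $u_k$ of $F_k(D^2 u_k)=0$ with $F_k$ satisfying (F1)--(F2) uniformly, $F_k(0)=0$, $\|u_k\|_{L^\infty(B_1)} \to 0$, yet $\|u_k/\|u_k\|_{L^\infty} - h\|_{L^\infty(B_{3/4})}$ stays bounded below for every linear-equation solution $h$. By Proposition~\ref{c1alph} the normalized functions $v_k := u_k/\|u_k\|_{L^\infty}$ are bounded in $C^{1,\alpha}_{loc}$, so along a subsequence $v_k \to v$ locally uniformly; meanwhile $DF_k(0) \to A_0$ for some admissible $A_0$ (Arzel\`a--Ascoli on the derivatives, using (F2)), and a standard stability argument for viscosity solutions — writing $F_k(M) = \int_0^1 DF_k(tM):M\,dt$ and using uniform continuity of $DF_k$ — shows $v$ solves $\tr(A_0 D^2 v)=0$ in $B_{3/4}$, a contradiction. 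Then I would combine this with the regularity of $v$: pick $Q$ to be its second-order Taylor polynomial at $0$, note $\tr(A_0 D^2 Q)=0$ and $|v - Q| \leq C|x|^3$ on $B_{1/2}$, transfer this back to $u$ at the cost of a small error, choose $\rho$ so that $C\rho^3 \leq \tfrac12\rho^{2+\alpha}$, i.e.\ $\rho$ small depending on $\alpha$, and absorb the linearization error by taking $\delta_0$ small.

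Next I would run the iteration. Applying the one-step lemma repeatedly produces quadratic polynomials $Q_k$ such that, with $A_k := DF(D^2(Q_0 + \dots + Q_{k-1}))$ — wait, more carefully: the polynomials accumulate, $P_k := \sum_{j<k} Q_j(\rho^j \cdot)/\rho^{2j}$ appropriately rescaled, and at each scale the equation for the remainder is $F_k(D^2 w) := \rho^{-2}_? \, [\,\cdots\,]$; the point is that the $D^2$ of the accumulated polynomial stays bounded by $C\|u\|_{L^\infty}$ (a convergent geometric series since $\|Q_k\| \lesssim \rho^{k\alpha}\|u\|_{L^\infty}$), so $DF$ is evaluated on a bounded set and the rescaled operators continue to satisfy (F1)--(F2) with the same constants and vanish at $0$. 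The rescaled solutions stay flat because $\|u\|$ at scale $\rho^k$ decays like $\rho^{k\alpha}$. Summing the $Q_k$ yields a quadratic polynomial $P_\infty(x) = \tfrac12 x^T D x + \ldots$ with $|u(x) - P_\infty(x)| \leq C\|u\|_{L^\infty}|x|^{2+\alpha}$ near $0$, and the same argument centered at any $y \in B_{1/2}$ (after translating; note $F$ has no $x$-dependence so the equation is unchanged) gives a second-order expansion with a uniform $C^{2,\alpha}$-type modulus and coefficient bound $C\|u\|_{L^\infty}$. A standard lemma (e.g.\ the pointwise-$C^{2,\alpha}$ criterion) upgrades uniform pointwise second-order expansions with uniformly H\"older-continuous "coefficients" to $u \in C^{2,\alpha}(B_{1/2})$ with $\|u\|_{C^{2,\alpha}(B_{1/2})} \leq C\|u\|_{L^\infty(B_1)}$.

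The main obstacle, and the step requiring the most care, is maintaining the structural hypotheses along the iteration: I must check that after subtracting an accumulating quadratic polynomial and rescaling, the new operator $\tilde F(M) := \rho^{-?}\bigl(F(\rho^? M + D^2 P) - F(D^2 P)\bigr)$ still satisfies (F1) with the \emph{same} $\lambda,\Lambda$ and (F2) with the \emph{same} modulus $\omega$ — the ellipticity is immediate from (F1), but the normalization $\tilde F(0)=0$ and the uniform-continuity bound both hinge on the $D^2 P$ terms staying in a fixed compact set, which in turn requires the geometric decay of the $\|Q_k\|$, so the estimates are genuinely coupled. A secondary technical point is the viscosity-stability argument in the compactness step, where one passes $F_k(D^2 u_k)=0$ to $\tr(A_0 D^2 v)=0$; this is routine but relies essentially on (F2), which is exactly why, as the introduction notes, the method fails for positively homogeneous $F$.
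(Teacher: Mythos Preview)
Your approach is essentially the same as the paper's --- Savin's compactness-plus-iteration scheme --- and the compactness step you describe matches Lemma~\ref{savin-lemma2} almost exactly (normalize, pass to a limit, identify the limit as a solution of the linearized equation $DF_0(0)\cdot D^2u_0=0$, and take its quadratic Taylor polynomial). The iteration is likewise the same idea.

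There is, however, one genuine gap in your sketch, precisely at the point you flag as delicate. You take $Q$ to satisfy only the \emph{linearized} equation $\tr(A_0 D^2Q)=0$, and then in the iteration you want the new operator $\tilde F(M):=F(M+D^2P)$ to satisfy $\tilde F(0)=0$. But $\tilde F(0)=F(D^2P)$, and nothing in your argument forces $F(D^2P)=0$; having $D^2P$ lie in a compact set only makes $F(D^2P)$ bounded, not zero. Your alternative formula $\tilde F(M)=F(M+D^2P)-F(D^2P)$ does vanish at $0$, but then $\tilde u$ is no longer a solution of $\tilde F(D^2\tilde u)=0$. Either way the iteration as written does not close.

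The paper's fix is simple and is exactly what you are missing: after obtaining from the linearized limit a polynomial $P$ with $DF_0(0)\cdot D^2P=0$, they use (F2) to show $F_k(\delta_k D^2P)=o(\delta_k)$, and then by uniform ellipticity add $a_k|x|^2$ with $a_k=o(\delta_k)$ to produce a corrected polynomial $P_k$ satisfying $F_k(D^2P_k)=0$ \emph{exactly}. With that correction in hand, the rescaled operator $\tilde F(M)=F(M+D^2P_k)$ genuinely satisfies $\tilde F(0)=0$, and your iteration goes through verbatim. Once you insert this adjustment, your proof and the paper's coincide.
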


In the case that $F \in C^2$ and $|D^2F|$ is bounded, Proposition~\ref{flatreg} is a special case of Theorem~1.3 in \cite{S}. For completeness, and because we need the result under slightly different hypotheses on $F$, we give a proof of Proposition~\ref{flatreg} here, following the argument of~\cite{S}.

\medskip

The key step in the proof of Proposition \ref{flatreg} is given by the following Lemma.

\begin{lem} \label{savin-lemma2}
Suppose in addition that $F(0) = 0$ and fix $0 < \alpha < 1$. Then there exist universal constants $\delta_0 > 0$ and $0 < \eta < 1$, depending also on $\alpha$, such that, if $u\in C(B_1)$ is a solution of \eqref{ueeq} in $B_1$, and
\begin{equation*}
\sup_{B_1} |u| \leq \delta,
\end{equation*}
then there is a quadratic polynomial $P$ satisfying $F(D^2P) = 0$ and
\begin{equation} \label{qpd}
\sup_{B_\eta} |u-P| \leq \eta^{2+\alpha} \sup_{B_1} |u|.
\end{equation}
\end{lem}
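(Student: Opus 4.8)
The plan is to prove Lemma~\ref{savin-lemma2} by a compactness/contradiction argument, which is the standard route for "improvement of flatness" statements of this type. Suppose the conclusion fails. Then there is a sequence of operators $F_k$ satisfying (F1) and (F2) with the \emph{same} ellipticity constants $\lambda,\Lambda$ and the same modulus $\omega$, with $F_k(0)=0$, and solutions $u_k\in C(B_1)$ of $F_k(D^2u_k)=0$ with $\delta_k:=\sup_{B_1}|u_k|\to 0$, such that for every quadratic polynomial $P$ with $F_k(D^2P)=0$ we have $\sup_{B_\eta}|u_k-P|>\eta^{2+\alpha}\delta_k$, where $\eta\in(0,1)$ is a small universal constant to be chosen. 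Normalize by setting $v_k:=u_k/\delta_k$, so $\sup_{B_1}|v_k|\le 1$ and $v_k$ solves the rescaled equation $\delta_k^{-1}F_k(\delta_k D^2 v_k)=0$.

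The key point is to identify the limiting equation. By (F2), $F_k$ is differentiable, so $\delta_k^{-1}F_k(\delta_k M)=DF_k(0):M+o(1)$ as $\delta_k\to 0$, uniformly on compact sets of $M$, with the error controlled by $\omega$; here $DF_k(0)=:A_k$ is a symmetric matrix with $\lambda I_n\le A_k\le\Lambda I_n$ by (F1). Passing to a subsequence, $A_k\to A$ with $\lambda I_n\le A\le\Lambda I_n$. By the interior $C^{1,\alpha}$ estimate (Proposition~\ref{c1alph}) applied to the $v_k$ (whose equations are uniformly elliptic with the fixed constants $\lambda,\Lambda$), the $v_k$ are uniformly bounded in $C^{1,\alpha}_{\mathrm{loc}}(B_1)$, hence a subsequence converges locally uniformly to some $v\in C(B_1)$ with $\sup_{B_1}|v|\le 1$. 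Using the stability of viscosity solutions under uniform convergence of solutions and locally uniform convergence of the (rescaled) operators to the linear operator $M\mapsto A:M$, the limit $v$ satisfies the constant-coefficient linear equation $\tr(A D^2 v)=0$ in $B_1$ in the viscosity, hence classical, sense. By interior Schauder/analytic regularity for this linear equation, $v$ is smooth, so its second-order Taylor polynomial $P_v(x):=v(0)+Dv(0)\cdot x+\tfrac12 x^\top D^2v(0)\,x$ satisfies $\tr(A\,D^2 P_v)=0$ and, since $v\in C^{2,1}$ near $0$ with bounds depending only on $n,\lambda,\Lambda$, we get $\sup_{B_\eta}|v-P_v|\le C\eta^3\le\tfrac12\eta^{2+\alpha}$ once $\eta$ is chosen small (universal, depending on $\alpha$).

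It remains to correct for the fact that $P_v$ solves the \emph{limiting} linear equation rather than $F_k(D^2P)=0$. Given $P_v$, I look for a quadratic polynomial $P_k$ of the form $P_k(x)=v_k(0)+Dv_k(0)\cdot x+\tfrac12 x^\top(D^2P_v+S_k)x$ with $S_k$ a small symmetric matrix chosen so that $F_k\big(\delta_k(D^2P_v+S_k)\big)=0$; equivalently, $\delta_k^{-1}F_k(\delta_k(D^2P_v+S_k))=0$. Since $\delta_k^{-1}F_k(\delta_k M)\to A:M=\tr(AM)$ in $C^1_{\mathrm{loc}}$ (the gradient in $M$ is $DF_k(\delta_k M)\to A$ by (F2)), and $\tr(A\,D^2P_v)=0$ with $\partial_S\,\tr(A(D^2P_v+S))=A$ invertible, the implicit function theorem gives, for $k$ large, a unique small solution $S_k$ with $|S_k|\to 0$. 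Then the rescaled polynomial $\delta_k^{-1}(\delta_k P_k)=P_k$ (abusing notation) satisfies $F_k(D^2P_k)=0$, and
\[
\sup_{B_\eta}|v_k-P_k|\le\sup_{B_\eta}|v_k-v|+\sup_{B_\eta}|v-P_v|+\sup_{B_\eta}|P_v-P_k|\le o(1)+\tfrac12\eta^{2+\alpha}+o(1)<\eta^{2+\alpha}
\]
for $k$ large, where $\sup_{B_\eta}|v_k-v|\to0$ by uniform convergence and $\sup_{B_\eta}|P_v-P_k|\to0$ because $v_k(0)\to v(0)$, $Dv_k(0)\to Dv(0)$ (from $C^{1,\alpha}$ convergence) and $|S_k|\to0$. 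Undoing the normalization, $\delta_k P_k$ contradicts the assumed failure of \eqref{qpd} for $u_k$. The main obstacle, and the place where hypothesis (F2) is genuinely used, is exactly this last correction step: without $C^1$ dependence of $F$ on $D^2u$ one cannot linearize $\delta_k^{-1}F_k(\delta_k\,\cdot)$ nor solve $F_k(D^2P_k)=0$ for a nearby $P_k$, so the compactness argument would produce a limiting solution of a \emph{linear} equation but no admissible competitor polynomial at the $F_k$ level. One should also be mildly careful that the class of operators is closed under the limit — this is immediate since (F1) and (F2) are preserved with the same $\lambda,\Lambda,\omega$, and $F_k(0)=0$ passes to the limit.

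Given Lemma~\ref{savin-lemma2}, Proposition~\ref{flatreg} follows by the usual iteration: rescaling and applying the lemma at successively smaller scales produces quadratic polynomials $P_j$ with $F(D^2P_j)=0$ and $\sup_{B_{\eta^j}}|u-P_j|\le\eta^{(2+\alpha)j}\|u\|_{L^\infty(B_1)}$, whose coefficients form Cauchy sequences (the openings stay in the admissible set $\lambda I_n\le\cdot\le\Lambda I_n$-type region and converge), yielding a $C^{2,\alpha}$ expansion of $u$ at $0$; translating the base point gives the estimate on all of $B_{1/2}$, with the stated bound $\|u\|_{C^{2,\alpha}(B_{1/2})}\le C\|u\|_{L^\infty}$.
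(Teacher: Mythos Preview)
Your argument is correct and follows essentially the same compactness-and-linearization route as the paper: contradict, normalize, pass to a limit solving the constant-coefficient linear equation determined by $DF(0)$, approximate by its second-order Taylor polynomial, and then perturb that polynomial to an exact solution of $F_k(D^2P_k)=0$. One small technical point: in your correction step, $F_k(\delta_k(D^2P_v+S_k))=0$ is a single scalar equation in the matrix unknown $S_k$, so there is no uniqueness and the clause ``$\partial_S\,\tr(A(D^2P_v+S))=A$ invertible'' does not set up an implicit function theorem as written; the paper simply takes a one-parameter correction $P_k(x)=\delta_kP(x)+a_k|x|^2$ and uses uniform ellipticity of $F_k$ (monotonicity in the identity direction) to find a scalar $a_k=o(\delta_k)$, which is both simpler and what your argument actually needs.
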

\begin{proof}
We argue by compactness. With $\eta > 0$ to be chosen below, assume on the contrary that there exist sequences $\{F_k\}$ and $\{u_k\}$, such that:
\begin{itemize}
\item $F_k:\Sy\to \R$ satisfies (F1) and (F2) with the same $\lambda$, $\Lambda$, $\omega$, and $F_k(0) = 0$;
\item $u_k\in C(B_1)$ satisfies $F_k(D^2u_k) = 0$ in $B_1$;
\item $\delta_k:=\sup_{B_1} |u_k| \rightarrow 0$ as $k\to \infty$; and
\item there is no quadratic polynomial $P$ satisfying \eqref{qpd} for $u=u_k$. 
\end{itemize}
Using interior H\"older estimates and taking a subsequence, if necessary, we may suppose that there is an operator $F_0$ and a function $u_0 \in C(B_1)$ such that, as $k\to \infty$, we have the limits:
\begin{itemize}
\item $F_k \rightarrow F_0$ locally uniformly on $\Sy$;
\item $DF_k \rightarrow DF_0$ locally uniformly on $\Sy$; and
\item $m_k^{-1} u_k \rightarrow u_0$ locally uniformly in $B_1$.
\end{itemize}
We claim that $u_0$ is a solution of the constant coefficient linear equation
\begin{equation} \label{savlin}
DF_0(0) \cdot D^2u_0 = 0 \quad \mbox{in} \ B_1
\end{equation}
To verify \eqref{savlin}, select a smooth test function $\varphi$ and a point $x_0\in B_1$ such that
\begin{equation*}
x\mapsto (u_0 - \varphi)(x) \quad \mbox{has a strict local maximum at} \ x = x_0.
\end{equation*}
Then we can find a sequence $x_k \in B_1$ such that $x_k \rightarrow x_0$ as $k\to \infty$, and
\begin{equation*}
x \mapsto \left( u_k - \delta_k \varphi \right)(x) \quad \mbox{has a local maximum at} \ x = x_k.
\end{equation*}
Therefore,
\begin{equation*}
F_k\left(\delta_k D^2\varphi(x_k) \right) \leq 0.
\end{equation*}
Observe that 
\begin{multline*}
F_k\left(\delta_k D^2\varphi(x_k) \right) = \frac{d}{dt}\int_0^{\delta_k} F_k(tD^2\varphi(x_k)) \, dt  = \int_0^{\delta_k} DF_k(tD^2\varphi(x_k)) \cdot D^2\varphi(x_k) \, dt \\ \geq \delta_k DF_k(0) \cdot D^2\varphi(x_k) - \delta_k |D\varphi(x_k)|\, \omega\!\left( \delta_k |D^2\varphi(x_k)| \right).
\end{multline*}
Combining the last two inequalities, dividing by $\delta_k$, and letting $k\to \infty$ yields
\begin{equation*}
DF_0(0) \cdot D^2\varphi(x_0) \leq 0.
\end{equation*}
We have shown that $u_0$ is a subsolution of \eqref{savlin}, and checking that it is a supersolution is done by a similar argument.

Up to a change of coordinates, equation \eqref{savlin} is Laplace's equation. Since $\| u_0 \|_{L^\infty(B_1)} \leq 1$, standard estimates imply that $u_0 \in C^\infty(B_1)$, and that the quadratic polynomial $P(x): = u_0(0) + x \cdot Du_0(0) + x\cdot D^2u_0(0) x$ satisfies
\begin{equation} \label{unotqp}
\sup_{B_\eta} | u_0 - P | \leq C \eta^{3} \leq \frac{1}{2} \eta^{2+\alpha}.
\end{equation}
for a universal constant $\eta=\eta_\alpha> 0$, chosen sufficiently small and depending also on $\alpha$, and we also have
\begin{equation*}
DF_0(0) \cdot D^2P = 0.
\end{equation*}
Therefore,
\begin{equation*}
|DF_k(0) \cdot D^2P| = o(1) \quad \mbox{as} \ k \to \infty.
\end{equation*}
It follows that
\begin{multline*}
F_k(\delta_k D^2 P) = \frac{d}{dt} \int_0^{\delta_k} F_k(tD^2P) \, dt = \int_0^{\delta_k} DF_k(tD^2P) \cdot D^2P\, dt \\
\leq \delta_k DF_k(0) \cdot D^2P + \delta_k |D^2P| \, \omega \! \left( \delta_k|D^2P| \right) = o(\delta_k) \quad \mbox{as} \ k \to \infty.
\end{multline*}
Since $F_k$ is uniformly elliptic, we can find a constant $a_k\in \R$, of order $|a_k| = o(\delta_k)$, such that $P_k(x) : = \delta_k P(x) + a_k |x|^2$ satisfies $F(D^2P_k) = 0$. Using this, the uniform convergence of $u_k$ to $u_0$ on $B_\eta$, and multiplying \eqref{unotqp} by $\delta_k$, we see that for large enough $k$,
\begin{equation*}
\sup_{B_\eta} | u_k - P_k | \leq \delta_k \eta^{2+\alpha}. 
\end{equation*}
This contradiction completes the proof.
\end{proof}

\begin{proof}[Proof of Proposition~\ref{flatreg}]
By a standard covering argument, it is enough to show that the estimate holds at the origin. More precisely, we have to show that if $\|u\|_{L^\infty(B_1)} = \delta < \delta_0$, then there is a quadratic polynomial $P$ such that $F(D^2P)=0$, $|P| \leq C \delta$ in $B_1$ and 
\begin{equation} \label{P1}
|u(x) - P(x)| \leq C \delta |x|^{2+\alpha} \ \text{ for all } x \in B_1.
\end{equation}
The idea of the proof is to apply Lemma \ref{savin-lemma2}, in a decreasing sequence of scales, obtaining a sequence of quadratic polynomials approximating $u$ at zero with an appropriate error estimate.

Let $\eta \in (0,1)$ and $\delta_0 >0$ be as in Lemma \ref{savin-lemma2}. We will construct by induction a sequence of quadratic polynomials $\{P_k\}_{k=1}^\infty$ such that
\begin{equation} \label{Pk}
F(D^2 P_k)=0 \qquad \text{and} \qquad\| u-P_k\|_{L^\infty(B_{\eta^k})} \leq \delta \eta^{(2+\alpha)k}.
\end{equation}
Moreover, we will show that this sequence is convergent and its limit as $k \to \infty$ will be the desired polynomial $P$ giving the second order expansion of $u$ at the origin.

Since $||u||_{L^\infty(B_1)} = \delta$, $P_0=0$ suffices for the case $k=0$. Let us suppose that we have a quadratic polynomial $P_k$ for which \eqref{Pk} holds. Let $\tilde u$ and $\tilde F$ denote
\[ \tilde u(x) := \eta^{-2k} (u(\eta^kx) - P_k(\eta^kx)) \qquad \text{and} \qquad \tilde F(M) := F(M + D^2P_k).\]
Observe that $\tilde F(D^2 \tilde u)=0$ in $B_1$ and $|\tilde u| \leq \delta \eta^{k\alpha}$ in $B_1$. Applying Lemma \ref{savin-lemma2}, we find a quadratic polynomial $\tilde P_k$ such that $|\tilde P_k| \leq C \delta \eta^{k\alpha}$ in $B_1$ and
\[ \tilde F(D^2\tilde P_k)=0 \qquad \text{and} \qquad \|\tilde u - \tilde P_k\|_{B_\eta} \leq C \delta \eta^{k\alpha} \eta^{2+\alpha}=\delta \eta^{2+(k+1)\alpha} .\]
Let $P_{k+1} = P_k + \eta^{2k} \tilde P_k(\eta^{-k} x)$. From the estimate above, we have
\[ F(D^2 P_{k+1})=0 \qquad \text{and} \qquad \|u - P_{k+1}\|_{B_{\eta^{k+1}}} \leq C \delta \eta^{(k+1)(2+\alpha)} .\]
This completes the inductive construction of a sequence of polynomials satisfying \eqref{Pk}.

\medskip

It remains to show that the sequence $\{ P_k \}$ is convergent and that its limit $P$ satisfies \eqref{P1}. Since $|\tilde P_k| \leq C \delta \eta^{\alpha k}$ in $B_1$, its coefficients are bounded by $C \delta \eta^{\alpha k}$. More precisely, if $\tilde P = a_k + b_k \cdot x + x\cdot C_k x$, then $|a_k| + |b_k| + |C_k| \leq C \delta \eta^{\alpha k}$. Therefore
\[ P_{k+1} - P_k = \eta^{2k} \tilde P_k(\eta^{-k} x) = \eta^{2k} a_k + \eta^k b_k \cdot x + x^t C_k x. \]
Since $\eta<1$, all the coefficients of $P_{k+1}-P_k$ are bounded by the geometric series $C \delta \eta^{\alpha k}$. Therefore the sum $\sum_{k=1}^\infty (P_{k+1}-P_k)$ is telescoping and hence convergent, and we may define
\begin{equation*}
P := \lim_{k \to \infty} P_k = \sum_{k=1}^\infty (P_{k+1}-P_k).
\end{equation*}
Since $F(D^2 P_k)=0$ for every $k$ and $F$ is continuous, we also have $F(D^2 P)=0$.

\medskip

Writing $P(x) = a + b \cdot x + x^t C x$, we have the following estimates for the coefficients:
\begin{align*}
& |a-a_k| \leq \sum_{j=k}^\infty |a_{j+1}-a_j| \leq \sum_{j=k}^\infty C \eta^{(2+\alpha)j} \delta = C \eta^{(2+\alpha)k} \delta, \\
& |b-b_k| \leq \sum_{j=k}^\infty |b_{j+1}-b_j| \leq \sum_{j=k}^\infty C \eta^{(1+\alpha)j} \delta = C \eta^{(1+\alpha)k} \delta, \\
& |C-C_k| \leq \sum_{j=k}^\infty |C_{j+1}-C_j| \leq \sum_{j=k}^\infty C \eta^{\alpha j} \delta = C \eta^{\alpha k} \delta.
\end{align*}
Therefore $|P(x)-P_k(x)| \leq C \delta \eta^{(2+\alpha)k}$ if $x \in B_{\eta^k}$. In particular, $|P| = |P-P_0| \leq C \delta$ in $B_1$. Fix $x \in B_1$, and let $k$ be the integer so that $\eta^{k+1} < |x| \leq \eta^k$. Then we estimate
\begin{equation*}
|u(x) - P(x)| \leq |u(x) - P_k(x)| + |P_k(x)-P(x)| \leq C \delta \eta^{(2+\alpha)k} \leq C \delta |x|^{2+\alpha},
\end{equation*}
which completes the proof.
\end{proof}

\section{Partial regularity} \label{PR}

In this section, we prove our main result. We first apply the $W^{2,\ep}$ estimate in Proposition~\ref{w2ep} to the derivative of $u$, in effect deriving a $W^{3,\ep}$ estimate, and then to use this result and a scaling argument combined with Proposition~\ref{flatreg} to obtain the theorem.

\medskip

To state the $W^{3,\ep}$ estimate, we define the quantity
\begin{multline*}
\Psi(u,\Omega)(x)  :=  \inf\big{\{} A \geq 0 : \ \mbox{there exists} \ p \in \Rd \ \mbox{and} \ M\in \Mat \ \mbox{such that for all} \ y \in \Omega, \\ 
|u(y) - u(x) + p\cdot (x-y) +(x-y) \cdot M(x-y)| \leq \tfrac16 A|x-y|^3 \big{\}}.
\end{multline*}
The following lemma records an elementary relation between $\Psi(u,B_1)$ and $\Theta(u_{x_i},B_1)$.

\begin{lem}\label{theta-delta}
Assume that $u\in C^1(B_1)$. Then for each $x\in B_1$,
\begin{equation}
\Psi(u,B_1)(x) \leq \left( \sum_{i=1}^{\d}(\Theta(u_{x_i},B_1)(x))^2 \right)^{1/2}.
\end{equation}
\end{lem}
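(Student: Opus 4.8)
The plan is to unwind the definitions and produce, at a fixed point $x \in B_1$, the affine map $p$ and the matrix $M$ witnessing the bound on $\Psi(u,B_1)(x)$ directly from the paraboloids that touch each partial derivative $u_{x_i}$ from above and below. First I would set $A_i := \Theta(u_{x_i}, B_1)(x)$ for $i = 1, \dots, \d$; if any $A_i = +\infty$ there is nothing to prove, so assume all are finite. By definition of $\Theta$, for each $i$ there exist vectors $q^i \in \Rd$ such that
\[
\left| u_{x_i}(y) - u_{x_i}(x) - q^i \cdot (y - x) \right| \leq \tfrac12 A_i |y-x|^2 \qquad \mbox{for all } y \in B_1.
\]
(The upper and lower paraboloid conditions defining $\overline\Theta$ and $\underline\Theta$ combine to this two-sided estimate, possibly at the cost of replacing $q^i$ by an average of the two linear parts; since $\Theta = \max\{\underline\Theta,\overline\Theta\}$ this is harmless.) This says that the gradient field $Du$ has a first-order Taylor expansion at $x$ with a controlled quadratic remainder in each coordinate.

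Next I would integrate along the segment from $x$ to $y$. Writing $g(t) := u(x + t(y-x))$, we have $g'(t) = Du(x + t(y-x)) \cdot (y-x)$, and $u(y) - u(x) = \int_0^1 g'(t)\,dt$. Substituting the componentwise expansion of $Du$ and integrating the affine part explicitly yields
\[
u(y) = u(x) + Du(x)\cdot(y-x) + \tfrac12 (y-x)\cdot Q (y-x) + E(x,y),
\]
where $Q$ is the (symmetrized) matrix with entries $Q_{ij} = q^i_j$, and the error $E(x,y)$ is bounded by $\int_0^1 \sum_i \left| u_{x_i}(x+t(y-x)) - u_{x_i}(x) - q^i\cdot t(y-x)\right| |y_i - x_i|\,dt \leq \sum_i \tfrac12 A_i |y-x|^2 |y_i - x_i| \int_0^1 t^2 \, dt$. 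Actually integrating $g'$ requires the expansion at the interpolated point $x + t(y-x)$, so the quadratic bound $\tfrac12 A_i |t(y-x)|^2$ appears; combined with the factor $|y_i - x_i| \le |y-x|$ and $\int_0^1 t^2\,dt = \tfrac13$, this gives $|E(x,y)| \leq \tfrac16 \big(\sum_i A_i |y_i-x_i|/|y-x|\big) |y-x|^3 \le \tfrac16 (\sum_i A_i^2)^{1/2} |y-x|^3$ by Cauchy-Schwarz. Taking $p = -Du(x)$ and $M = \tfrac12 Q$ (matching the sign conventions in the definition of $\Psi$) then shows $\Psi(u,B_1)(x) \le (\sum_i A_i^2)^{1/2}$, which is the claim.

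The only delicate point is the bookkeeping around the two-sided estimate for $u_{x_i}$: the definition of $\Theta$ only guarantees a touching paraboloid from above and one from below, with possibly different linear parts $\overline q^i, \underline q^i$, each with opening at most $A_i$. To get a single affine approximation with a symmetric two-sided error, one notes that for $y$ in a full neighborhood both inequalities hold, forcing $|\overline q^i - \underline q^i| \cdot |y-x|$ to be controlled by $A_i |y-x|^2$ for all small $|y-x|$, hence $\overline q^i = \underline q^i$ (take $|y-x| \to 0$ in directions); so in fact a single $q^i$ works and $Du$ is differentiable at $x$ in the relevant sense whenever $\Theta(u_{x_i}, B_1)(x) < \infty$. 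I expect this reconciliation of the one-sided definitions — rather than the integration, which is routine — to be the main thing to get right; everything else is the fundamental theorem of calculus plus Cauchy-Schwarz. One should also double-check the factor $\tfrac16$ against the normalization in the definition of $\Psi$ and in the definitions of $\underline\Theta, \overline\Theta$, but this is a routine constant check.
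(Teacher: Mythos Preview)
Your proposal is correct and follows essentially the same route as the paper: obtain a two-sided quadratic bound on each $u_{x_i}$ from the definition of $\Theta$, integrate along the segment from $x$ to $y$ via the fundamental theorem of calculus, and finish with Cauchy--Schwarz to get the $\tfrac16(\sum_i A_i^2)^{1/2}|y-x|^3$ error. You are in fact more explicit than the paper on the one genuinely subtle point---that the linear parts of the upper and lower touching paraboloids for $u_{x_i}$ must coincide---which the paper simply asserts; your argument that $|(\overline q^i-\underline q^i)\cdot(y-x)|\le A_i|y-x|^2$ for all nearby $y$ forces $\overline q^i=\underline q^i$ is exactly right. The only cosmetic wrinkle is that since $\Theta$ is defined as an infimum, you should (as the paper does) take $A_i>\Theta(u_{x_i},B_1)(x)$, prove $\Psi\le(\sum_i A_i^2)^{1/2}$, and then let $A_i\downarrow\Theta(u_{x_i},B_1)(x)$, rather than setting $A_i$ equal to the infimum and assuming the touching paraboloids exist at that exact opening.
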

\begin{proof}
Suppose that $x\in B_1$ and $A_i \geq 0$ are such that $\Theta(u_{x_i},B_1)(x) \leq A_i$ for each $i=1,\ldots,\d$. Then we can find vectors $p^1,\ldots, p^n \in \Rd$ such that
\begin{equation} \label{thetie}
\left| u_{x_i}(y) - u_{x_i}(x) + p^i\cdot(x-y) \right| \leq \frac{1}{2} A_i|x-y|^2 \quad \mbox{for all} \ y\in B_1.
\end{equation}
Let $M\in \Mat$ be the matrix with entries $\frac12p^i_j$. It follows that 
\begin{multline} \label{inftoa}
u(y) - u(x) + Du(x)\cdot (x-y) + (x-y) \cdot M(x-y) \\
= (y-x) \cdot \int_0^1 Du(x+t(y-x)) - Du(x) + 2t M \cdot (y-x)\, dt.
\end{multline}
According to \eqref{thetie},
\begin{equation*}
\int_0^1 \left| u_{x_i} (x+t(y-x)) - u_{x_i} (x) + t p^i_j \cdot (y-x) \right|\,dt \leq \frac12 A_it^2|x-y|^2.
\end{equation*}
Denoting $A = (A_1,\ldots,A_n)$ and using the previous inequality and \eqref{inftoa}, we obtain
\begin{multline*}
\left| u(y) - u(x) + Du(x)\cdot (x-y) + (x-y) \cdot M(x-y) \right| \\ \leq (y-x) \cdot \int_0^1 \frac12 A t^2  |x-y|^2\, dt \leq \frac{1}{6} |A| |x-y|^3\,dt.
\end{multline*}
Thus $\Psi(u,B_1) \leq |A|$, as desired.
\end{proof}

In the next lemma, we formulate the $W^{3,\ep}$ estimate in an appropriate way for its application in the proof of Theorem~\ref{main}. A similar statement was used by Caffarelli and Souganidis~\cite{CSo} to obtain an algebraic rate of convergence for monotone finite difference approximations of uniformly elliptic equations.

\begin{lem} \label{mainlem1}
Suppose $u \in C(B_1)$ solves \eqref{ueeq} in $B_1$ and satisfies $\sup_{B_1} |u| \leq 1$. There are universal constants $C, \ep > 0$ such that, if $t > 1$, then
\begin{equation} \label{mainlem1est}
\left| \left\{ x\in B_{1/2} : \Psi(u,B_1)(x) > t \right\} \right| \leq C t^{-\ep}.
\end{equation}
\end{lem}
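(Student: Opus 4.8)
The plan is to ``integrate the $W^{2,\ep}$ estimate once'': differentiate \eqref{ueeq}, apply Proposition~\ref{w2ep} to the partial derivatives $u_{x_i}$, and then convert the resulting $W^{3,\ep}$-type bound into a bound on $\Psi$ via Lemma~\ref{theta-delta}. First I would record that, applying Proposition~\ref{c1alph} on small balls together with a covering, $u\in C^{1,\alpha}_{\mathrm{loc}}(B_1)$ with $\|u\|_{C^{1,\alpha}(\overline B_r)}$ bounded by a universal constant for each fixed $r<1$; in particular each $u_{x_i}$ is a well-defined continuous function on the open ball $B_1$. Next, for $i=1,\dots,\d$ and small $h>0$, the difference quotient $v^h_i:=h^{-1}(u(\cdot+he_i)-u)$ is continuous and, by the standard fact that the difference of two solutions of \eqref{ueeq} lies in the Pucci class (a consequence of (F1); see \cite{CC}), satisfies $\pucci(D^2v^h_i)\le 0$ and $\Pucci(D^2v^h_i)\ge 0$ in the viscosity sense on $B_{1-h}$; moreover $\sup_{B_{7/8}}|v^h_i|\le C$ uniformly in $h$ by the interior $C^{1,\alpha}$ bound.

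Rescaling $B_{7/8}$ to $B_1$ and applying Proposition~\ref{w2ep} to $v^h_i$, and also to $-v^h_i$ (admissible since $\Pucci(-M)=-\pucci(M)$, and $\underline\Theta(-v^h_i,\cdot)=\overline\Theta(v^h_i,\cdot)$), one obtains, uniformly in $h$,
\[ \big|\{x\in B_{7/16}:\Theta(v^h_i,B_{7/8})(x)>t\}\big|\le Ct^{-\ep}\qquad\text{for }t>C. \]
Now I would pass to the limit $h\to0$. Since $u\in C^1_{\mathrm{loc}}(B_1)$, the quotients $v^h_i$ converge to $u_{x_i}$ locally uniformly in $B_{7/8}$, and I claim $\underline\Theta(u_{x_i},B_{7/8})(x)\le\liminf_{h\to0}\underline\Theta(v^h_i,B_{7/8})(x)$ at every $x$: picking a subsequence along which the $\liminf$ is attained, the affine data $p_k$ in the inequalities defining $\underline\Theta(v^{h_k}_i,B_{7/8})(x)$ are bounded (using the uniform $L^\infty$ bound on $v^{h_k}_i$ and evaluating near $x$), so one passes to the limit in the inequality; the analogue holds for $\overline\Theta$. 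Since $\{\liminf_h a_h>t\}\subseteq\liminf_h\{a_h>t\}$ at the level of characteristic functions, Fatou's lemma upgrades the displayed bound to the same estimate for $\Theta(u_{x_i},B_{7/8})$.

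Summing over $i$ via $\{(\sum_i a_i^2)^{1/2}>t\}\subseteq\bigcup_i\{a_i>t/\sqrt\d\}$ and invoking the (scale-invariant) Lemma~\ref{theta-delta} on $B_{7/8}$, this yields the ``interior'' estimate $|\{x\in B_{7/16}:\Psi(u,B_{7/8})(x)>t\}|\le Ct^{-\ep}$ for all $t>1$, the range $t\le C$ being handled trivially by $|B_{7/16}|\le Ct^{-\ep}$. It remains to upgrade this to the stated estimate on $B_{1/2}$ with the domain $B_1$. This is a routine rescaling-and-covering argument, made painless by the following observation: because of the cubic weight $|x-y|^3$, for $x\in B_{1/2}$ one has $\Psi(u,B_1)(x)\le C_1+C_2\,\Psi(u,B(x,r_0))(x)$ for universal $C_1,C_2,r_0$, since any $(p,M)$ admissible at scale $r_0$ has size $O(1)+O(\Psi(u,B(x,r_0))(x))$ and the constraint from $y$ with $|x-y|\ge r_0$ is then met once the opening is a universal multiple of $\Psi(u,B(x,r_0))(x)$ plus a universal constant. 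One then estimates the distribution of $x\mapsto\Psi(u,B(x,r_0))(x)$ over $B_{1/2}$ by covering $B_{1/2}$ by finitely many balls, rescaling each to $B_1$ (which leaves \eqref{ueeq} invariant and keeps $\|u\|_\infty$ universally bounded), and applying the interior estimate; crucially the constant in Proposition~\ref{w2ep}, hence in the interior estimate, is independent of $\sup|u|$, so the rescalings cost only universal factors.

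I expect the main obstacle to be the limiting argument of the third paragraph — in particular the pointwise lower semicontinuity of $\underline\Theta$ along the difference quotients, which is precisely where the ``global domain'' formulation of $\underline\Theta$ emphasized after Proposition~\ref{w2ep} must be handled with care — together with the bookkeeping of radii in the final transfer to the domain $B_1$; the cubic-weight observation is the point that keeps that last step from requiring a genuinely new idea.
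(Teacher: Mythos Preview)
Your proposal is correct and follows essentially the same strategy as the paper: differentiate \eqref{ueeq}, apply the $W^{2,\ep}$ estimate to the partial derivatives, and invoke Lemma~\ref{theta-delta}. The paper's proof is much shorter because it cites \cite[Proposition~5.5]{CC} directly to assert that each $u_e$ satisfies $\pucci(D^2u_e)\le 0\le \Pucci(D^2u_e)$ in the viscosity sense on all of $B_1$ (this is exactly the stability-under-uniform-limits of your difference quotients, applied at the level of the equation rather than at the level of $\underline\Theta$), which lets one apply Proposition~\ref{w2ep} to $u_e$ on $B_1$ itself and renders both your limiting argument for $\underline\Theta$ and the final domain-transfer/covering step superfluous.
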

\begin{proof}
According to Proposition~\ref{c1alph}, we have that $u\in C^1(B_1)$. Moreover, according to \cite[Proposition 5.5]{CC}, for every unit direction $e\in \Rd$, $|e|=1$, the function $u_e:=e\cdot Du$ satisfies the inequalities
\begin{equation*}
\pucci(D^2 u_e) \leq 0 \leq \Pucci(D^2u_e) \quad \mbox{in} \ B_1,
\end{equation*}
in the viscosity sense. According to Proposition~\ref{w2ep}, we have, for each $t> 1$,
\begin{equation*}
\left| \left\{ x\in B_{1/2} : \underline\Theta(u,B_1)(x) > t\right\} \right| \leq Ct^{-\ep},
\end{equation*}
where $C,\ep > 0$ are universal constants. An application of Lemma~\ref{theta-delta} yields  \eqref{mainlem1est}.
\end{proof}

\begin{lem} \label{mainlem2}
Suppose that $u$ satisfies the hypotheses of Lemma~\ref{mainlem1} and that $0 < \alpha < 1$. There is a universal constant $\delta_\alpha > 0$, such that for every $y \in B_{1/2}$ and $0 < r < \tfrac1{16}$,
\begin{equation} \label{Delta-to-reg}
\left\{ \Psi(u,B_1) \leq r^{-1} \delta_\alpha \right\} \cap B(y,r) \neq \emptyset \qquad \mbox{implies that} \qquad u\in C^{2,\alpha}(B(y,r)).
\end{equation}
\end{lem}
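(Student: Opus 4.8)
The plan is to deduce Lemma~\ref{mainlem2} from the flat-solution regularity of Proposition~\ref{flatreg} by a rescaling argument centered at a point where $\Psi(u,B_1)$ is controlled. Suppose $x_0 \in B(y,r)$ satisfies $\Psi(u,B_1)(x_0) \leq r^{-1}\delta_\alpha$. By definition of $\Psi$, there is $p \in \Rd$ and $M \in \Mat$ such that the cubic polynomial $L(z) := u(x_0) - p\cdot(x_0-z) - (x_0-z)\cdot M(x_0-z)$ approximates $u$ near $x_0$ with
\[
|u(z) - L(z)| \leq \tfrac16 r^{-1}\delta_\alpha |z-x_0|^3 \quad \text{for all } z \in B_1.
\]
Without loss of generality $M$ is symmetric (replace $M$ by its symmetric part). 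The first step is to subtract this quadratic polynomial: set $w(z) := u(z) - L(z)$. Since $u$ solves \eqref{ueeq}, $w$ is a solution of $\tilde F(D^2 w) = 0$ where $\tilde F(N) := F(N + 2M)$, which still satisfies (F1) and (F2) with the same constants. To apply Proposition~\ref{flatreg} we must also arrange $\tilde F(0) = 0$; this is done by subtracting from $w$ a multiple of $|z|^2$, i.e.\ replacing $2M$ by $2M + cI$ for the constant $c$ with $F(2M + cI) = 0$ (which exists and is unique by uniform ellipticity), absorbing that quadratic correction into $L$. After this normalization we have a solution, still called $w$, of an equation $\tilde F(D^2 w) = 0$ with $\tilde F(0) = 0$, satisfying $|w(z)| \leq \tfrac16 r^{-1}\delta_\alpha |z-x_0|^3$ near $x_0$.

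The second step is the rescaling. Since $x_0 \in B(y,r)$ and $r < 1/16$, the ball $B(x_0, 2r)$ is contained in $B_1$ (indeed $|y| \le 1/2$ so $|x_0| \le 1/2 + 1/16$ and $B(x_0,2r) \subseteq B_{5/8}\subseteq B_1$). Define the rescaled function
\[
v(z) := \frac{1}{(2r)^2}\, w(x_0 + 2r z) \qquad \text{for } z \in B_1.
\]
Then $v$ solves $\tilde F(D^2 v) = 0$ in $B_1$ (the equation is scale-invariant because it has no lower-order or $x$-dependence), and from the cubic bound on $w$,
\[
\sup_{B_1} |v| \leq \frac{1}{4r^2}\cdot \frac16 r^{-1}\delta_\alpha (2r)^3 = \frac{4}{3}\,\delta_\alpha.
\]
Now choose $\delta_\alpha := \tfrac34 \delta_0(\alpha)$, where $\delta_0(\alpha)$ is the constant of Proposition~\ref{flatreg}; then $\sup_{B_1}|v| \leq \delta_0(\alpha)$, so Proposition~\ref{flatreg} applies and gives $v \in C^{2,\alpha}(B_{1/2})$. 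Undoing the scaling, $w \in C^{2,\alpha}(B(x_0, r))$, and since $L$ is a polynomial, $u = w + L \in C^{2,\alpha}(B(x_0,r))$. Finally $B(y,r) \subseteq B(x_0, 2r)$, so to cover all of $B(y,r)$ one either runs the same argument at every point of $B(y,r)$ (each such point lies within distance $2r < 1/8$ of $y$, so the inclusions still hold, but one only gets regularity on a smaller ball around each point), or more cleanly notes that $C^{2,\alpha}$ is a local property: having shown $u$ is $C^{2,\alpha}$ in a neighborhood of the single point $x_0$ is not yet enough, so one repeats the rescaling argument with $2r$ replaced by a slightly larger radius, or with $x_0$ ranging over $B(y,r)$, to conclude $u \in C^{2,\alpha}(B(y,r))$; the point is that the hypothesis at $x_0$ controls $\Psi$ relative to the whole domain $B_1$, which is what makes the single-point information propagate.

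The main obstacle I anticipate is bookkeeping of constants across the three normalizations (subtracting the quadratic $L$, shifting $\tilde F$ so that $\tilde F(0)=0$, and the parabolic rescaling by $2r$), and in particular making sure the smallness threshold $\delta_\alpha$ comes out universal — depending only on $\alpha$ and the ellipticity — rather than on $r$ or on the size of $M$. The key point resolving this is that $\Psi$ measures the cubic error over the \emph{full} ball $B_1$, so after rescaling by $2r$ the sup-norm of $v$ picks up exactly the factor $(2r)^3/(2r)^2 = 2r$ against the $r^{-1}$ in the hypothesis, leaving a dimensionless quantity proportional to $\delta_\alpha$. A secondary technical point is checking that $\tilde F(N) = F(N + 2M + cI)$ genuinely inherits (F2) with the \emph{same} modulus $\omega$ — which is immediate since translating the argument does not change $DF$'s modulus of continuity — and that the constant $c$ rendering $\tilde F(0) = 0$ is well-defined, which follows from uniform ellipticity forcing $t \mapsto F(2M + tI)$ to be strictly monotone and onto $\R$.
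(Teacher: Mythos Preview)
Your approach is the same as the paper's --- subtract the approximating quadratic, rescale, and apply Proposition~\ref{flatreg} --- but there is one genuine gap and one avoidable complication.

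\medskip

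\textbf{The gap: the correction $c$.} You propose to arrange $\tilde F(0)=0$ by replacing $2M$ with $2M+cI$ where $F(2M+cI)=0$, ``absorbing that quadratic correction into $L$'', and then you assert the cubic bound $|w(z)|\le \tfrac16 r^{-1}\delta_\alpha |z-x_0|^3$ still holds. It does not: after absorbing $\tfrac{c}{2}|z-x_0|^2$ into $L$ the new error is
\[
|w_{\mathrm{new}}(z)|\ \le\ \tfrac16 r^{-1}\delta_\alpha |z-x_0|^3 \;+\; \tfrac{|c|}{2}\,|z-x_0|^2,
\]
and after rescaling by $2r$ the sup of $v$ picks up an extra $|c|/2$, which you have not controlled. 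Since $M$ can be large, so can $|F(2M)|$ and hence $|c|$, and smallness is lost. The paper avoids this entirely by observing that the correction is unnecessary: because the error in the quadratic expansion is \emph{two-sided} and $o(|z-x_0|^2)$, the polynomial $L$ touches $u$ from above and from below at $x_0$, so the viscosity inequalities give $F(D^2L)\le 0$ and $F(D^2L)\ge 0$, i.e.\ $F(2M)=0$. Thus $\tilde F(N):=F(N+2M)$ already satisfies $\tilde F(0)=0$ with no shift. This is the one idea your write-up is missing.

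\medskip

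\textbf{The bookkeeping issue: the radius.} Rescaling by $2r$ yields $v\in C^{2,\alpha}(B_{1/2})$, hence $u\in C^{2,\alpha}(B(x_0,r))$, which need not contain $B(y,r)$; you notice this and hedge. The clean fix (which the paper uses) is simply to rescale by $4r$: one checks $B(x_0,4r)\subseteq B_1$ since $|x_0|<\tfrac12+\tfrac1{16}$ and $4r<\tfrac14$, so Proposition~\ref{flatreg} gives $u\in C^{2,\alpha}(B(x_0,2r))\supseteq B(y,r)$ in one shot. No iteration or ``running the argument at every point'' is needed.
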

\begin{proof}
Suppose that $0< r < \tfrac1{16}$, $y \in B_{1/2}$, and $z\in B(y,r)$ is such that
\begin{equation*}
\Psi (u,B_1)(z) \leq r^{-1}\delta. 
\end{equation*}
Then there exist $p \in \Rd$ and $M\in \Mat$ such that, for every $x\in B_1$,
\begin{equation} \label{touchz}
|u(x) - u(z) + p\cdot (z-x) +(z-x) \cdot M(z-x)| \leq \tfrac16 r^{-1} \delta |z-x|^3. 
\end{equation}
Replacing $M$ by $\frac12(M+M^t)$, we may assume that $M\in \Sy$. Since $u$ is a viscosity solution of \eqref{ueeq}, it is clear that $F(-M) = 0$. Define the function
\begin{equation*}
v(x): = \frac{1}{16r^2} \left( u(z+4rx) - u(z) + 4r p\cdot x + 16r^2 x\cdot Mx \right), \quad x\in B_{1}.
\end{equation*}
The inequality \eqref{touchz} implies that
\begin{equation*}
\sup_{B_1} |v| \leq \tfrac13 \delta.
\end{equation*}
Define the operator $\widetilde F(N) : = F(N-M)$, and observe $\widetilde F$ satisfies (F1) and (F2), with the same ellipticity constants $\lambda$, $\Lambda$, and modulus $\omega$, and $\widetilde F(0) = F(-M) = 0$. It is clear that $v$ is a solution of
\begin{equation*}
\widetilde F(D^2v) = 0 \quad \mbox{in} \ B_1.
\end{equation*}
Let $\delta_0> 0$ be the universal constant in Proposition~\ref{flatreg}, which also depends on $\alpha$. Suppose that $\delta \leq 3\delta_0$. Then Proposition~\ref{flatreg} yield that $v\in C^{2,\alpha}(B_{1/2})$, from which we deduce that $u \in C^{2,\alpha}(B(z,2r))$. Since $B(y,r) \subseteq B(z,2r)$, we are done.
\end{proof}

\begin{proof}[Proof of Theorem~\ref{main}]
By a standard covering argument, we may fix $0 < \alpha < 1$ and assume that $\Omega = B_1$, $u\in C(B_1)$ is bounded, and to show that $u\in C^{2,\alpha}(B_{1/2}\setminus \Sigma)$ for a set $\Sigma \subseteq \overline B_{1/2}$ with $\mathcal H_{\mathrm{dim}}(\Sigma) \leq \d-\ep$. Since, for every $t> 0$, the operator $F_t(M):= t^{-1} F(tM)$ satisfies both (F1) and (F2) with the same constants $\lambda$, $\Lambda$ but a different modulus $\omega$. Since the constant $\ep > 0$ we obtain does not depend on $\omega$, we may therefore assume without loss of generality that $\sup_{B_1} |u| \leq 1$.

\medskip

Let $\Sigma \subseteq B_{1/2}$ denote the set of points $x\in B_{1/2}$ for which $u\not\in C^{2,\alpha}(B(x,r))$, for every $r> 0$. Notice that $\Sigma$ is closed, and thus compact. Fix $0 < r < \tfrac{1}{16}$. According to the Vitali Covering Theorem, there exists a finite collection $\{ B(x_i,r) \}_{i=1}^m$ of disjoint balls of radius $r$, with centers $x_i \in \Sigma$, such that
\begin{equation*}
\Sigma \subseteq \bigcup_{i=1}^m B(x_i,3r).
\end{equation*}
Since $x_i\in \Sigma$, according to Lemma~\ref{mainlem2} there exists a universal constant $\delta$, also depending on $\alpha$, such that
\begin{equation*}
\Psi (u,B_1)(y) > r^{-1} \delta\quad \mbox{for every} \ y\in \bigcup_{i=1}^m B(x_i,r).
\end{equation*}
Applying Lemma~\ref{mainlem1}, we deduce that
\begin{equation*}
mr^{\d} \leq C m |B_{r}| \leq C r^{-\ep}
\end{equation*}
for universal constants $C,\ep> 0$. Therefore,
\begin{equation*}
\sum_{i=1}^m \left| B(x_i,3r) \right|^{\d-\ep} \leq C.
\end{equation*}
We deduce that $\mathcal{H}^{\d-\ep}(\Sigma) \leq C$ for a universal constant $C$. Therefore, $\mathcal{H}_{\mathrm{dim}}(\Sigma)\leq \d -\ep$.
\end{proof}

\begin{remark} \label{whatisep}
An inspection of the proof reveals that the codimension $\ep$ in Theorem~\ref{main} is equal to the exponent $\ep$ of the $W^{2,\ep}$ estimate of Proposition~\ref{w2ep}.  In particular, it does not depend on the modulus $\omega$ of $DF$. It follows that we could further reduce the dimension of the singular set if we could improve the exponent of the $W^{2, \ep}$ estimate. However, it is not possible to improve the exponent $\ep$ in the $W^{2,\ep}$ estimate, since as we saw in Remark~\ref{eprunaway}, the constant $\ep$ is at most $2(\Lambda/\lambda+1)^{-1}$.
\end{remark}

\subsection*{Acknowledgment}
The first author was partially supported by NSF Grant DMS-1004645, the second author by NSF grant DMS-1001629 and the Sloan Foundation, and the third author by NSF grant DMS-1004595.

\bibliographystyle{plain}
\bibliography{partialreg}

\begin{thebibliography}{10}

\bibitem{CC1}
X.~Cabr{\'e} and L.~A. Caffarelli.
\newblock Interior {$C^{2,\alpha}$} regularity theory for a class of nonconvex
  fully nonlinear elliptic equations.
\newblock {\em J. Math. Pures Appl. (9)}, 82(5):573--612, 2003.

\bibitem{CC}
L.~A. Caffarelli and X.~Cabr{\'e}.
\newblock {\em Fully nonlinear elliptic equations}, volume~43 of {\em American
  Mathematical Society Colloquium Publications}.
\newblock American Mathematical Society, Providence, RI, 1995.

\bibitem{CS}
L.~A. Caffarelli and L.~Silvestre.
\newblock On the {E}vans-{K}rylov theorem.
\newblock {\em Proc. Amer. Math. Soc.}, 138(1):263--265, 2010.

\bibitem{CSo}
L.~A. Caffarelli and P.~E. Souganidis.
\newblock A rate of convergence for monotone finite difference approximations
  to fully nonlinear, uniformly elliptic {PDE}s.
\newblock {\em Comm. Pure Appl. Math.}, 61(1):1--17, 2008.

\bibitem{E}
L.~C. Evans.
\newblock Classical solutions of fully nonlinear, convex, second-order elliptic
  equations.
\newblock {\em Comm. Pure Appl. Math.}, 35(3):333--363, 1982.

\bibitem{K}
N.~V. Krylov.
\newblock Boundedly inhomogeneous elliptic and parabolic equations in a domain.
\newblock {\em Izv. Akad. Nauk SSSR Ser. Mat.}, 47(1):75--108, 1983.

\bibitem{L}
F.-H. Lin.
\newblock Second derivative {$L^p$}-estimates for elliptic equations of
  nondivergent type.
\newblock {\em Proc. Amer. Math. Soc.}, 96(3):447--451, 1986.

\bibitem{NV1}
N.~Nadirashvili and S.~Vl{\u{a}}du{\c{t}}.
\newblock Nonclassical solutions of fully nonlinear elliptic equations.
\newblock {\em Geom. Funct. Anal.}, 17(4):1283--1296, 2007.

\bibitem{NV2}
N.~Nadirashvili and S.~Vl{\u{a}}du{\c{t}}.
\newblock Singular viscosity solutions to fully nonlinear elliptic equations.
\newblock {\em J. Math. Pures Appl. (9)}, 89(2):107--113, 2008.

\bibitem{S}
O.~Savin.
\newblock Small perturbation solutions for elliptic equations.
\newblock {\em Comm. Partial Differential Equations}, 32(4-6):557--578, 2007.

\bibitem{Y}
Y.~Yuan.
\newblock A priori estimates for solutions of fully nonlinear special
  {L}agrangian equations.
\newblock {\em Ann. Inst. H. Poincar\'e Anal. Non Lin\'eaire}, 18(2):261--270,
  2001.

\end{thebibliography}

\end{document}